\title[Sobolev mappings, degree and homotopy classes]
{{\protect{Sobolev mappings, degree, homotopy classes and rational homology spheres}}}
\author{Pawe\l{} Goldstein}
\address{Institute of Mathematics,\newline \indent Faculty of Mathematics,
Informatics and Mechanics, \newline \indent
University of Warsaw \newline \indent Banacha 2, 02-097 Warsaw, Poland}
\email{goldie@mimuw.edu.pl}
\author{Piotr Haj\l{}asz}
\address{Department of Mathematics, University of Pittsburgh,
\newline \indent 301 Thackeray Hall, Pittsburgh,
Pennsylvania 15260}
\email{hajlasz@pitt.edu}
\thanks{P.G. was partially supported by the Polish Ministry of
Science grant no. N N201 397737 (years 2009-2012)
and P.H. was partially supported by the NSF grant DMS-0900871.}
\subjclass[2000]{Primary 46E35; Secondary 46E30}
\keywords{Sobolev mappings, degree, rational homology spheres}
\DeclareMathOperator{\Hom}{Hom}
\DeclareMathOperator{\coker}{coker}
\def\eps{\varepsilon}
\newtheorem{theorem}{Theorem}
\newtheorem{lemma}[theorem]{Lemma}
\newtheorem{corollary}[theorem]{Corollary}
\newtheorem{proposition}[theorem]{Proposition}
\theoremstyle{definition}
\newtheorem{remark}[theorem]{Remark}
\newtheorem{definition}[theorem]{Definition}
\newtheorem{example}[theorem]{Example}
\newcommand{\barint}{
\rule[.036in]{.12in}{.009in}\kern-.16in \displaystyle\int }
\newcommand{\barcal}{\mbox{$ \rule[.036in]{.11in}{.007in}\kern-.128in\int $}}
\def\eqn#1$$#2$${\begin{equation}\label#1#2\end{equation}}
\def\vint#1_#2{-\kern-#1pt\int_{#2}}
\newcommand{\bbbz}{\mathbb Z}
\newcommand{\bbbr}{\mathbb R}
\newcommand{\bbbc}{\mathbb C}
\def\mvint_#1{\mathchoice
          {\mathop{\vrule width 6pt height 3 pt depth -2.5pt
                  \kern -8pt \intop}\nolimits_{\kern -3pt #1}}%
          {\mathop{\vrule width 5pt height 3 pt depth -2.6pt
                  \kern -6pt \intop}\nolimits_{#1}}%
          {\mathop{\vrule width 5pt height 3 pt depth -2.6pt
                  \kern -6pt \intop}\nolimits_{#1}}%
          {\mathop{\vrule width 5pt height 3 pt depth -2.6pt
                  \kern -6pt \intop}\nolimits_{#1}}}
\numberwithin{theorem}{section} \numberwithin{equation}{section}
\begin{document}


\sloppy

\begin{abstract}
In the paper we investigate the degree and the homotopy theory of
Orlicz-Sobolev mappings $W^{1,P}(M,N)$ between manifolds, where
the Young function $P$ satisfies a divergence condition and forms
a slightly larger space than $W^{1,n}$, $n=\dim M$. In particular,
we prove that if $M$ and $N$ are compact oriented manifolds
without boundary and $\dim M=\dim N=n$, then the degree is well
defined in $W^{1,P}(M,N)$ if and only if the universal cover of
$N$ is not a rational homology sphere, and in the case $n=4$, if
and only if $N$ is not homeomorphic to $S^4$.
\end{abstract}

\maketitle

\section{Introduction}

Let $M$ and $N$ be compact smooth Riemannian manifolds without
boundary. We consider the space of Sobolev mappings between manifolds defined
in a usual way: we assume that $N$ is isometrically embedded in
a Euclidean space $\bbbr^k$ and define $W^{1,p}(M,N)$ to be the
class of Sobolev mappings $u\in W^{1,p}(M,\bbbr^k)$ such that
$u(x)\in N$ a.e. The space $W^{1,p}(M,N)$ is a subset of a Banach
space $W^{1,p}(M,\bbbr^k)$ and it is equipped with a metric
inherited from the norm of the Sobolev space. It turns out that
smooth mappings $C^\infty(M,N)$ are not always dense in
$W^{1,p}(M,N)$ and we denote by $H^{1,p}(M,N)$ the closure of
$C^\infty(M,N)$ in the metric of $W^{1,p}(M,N)$. A complete
characterization of manifolds $M$ and $N$ for which smooth
mappings are dense in $W^{1,p}(M,N)$, i.e.
$W^{1,p}(M,N)=H^{1,p}(M,N)$, has recently been obtained by
Hang and Lin, \cite{hangl2}.

The class of Sobolev mappings between manifolds plays a central role in applications to
geometric variational problems and deep connections to algebraic topology have been
investigated recently, see e.g.
\cite{bethuel},
\cite{bethuelcdh},
\cite{bethuelz},
\cite{brezisl},
\cite{giaquintams1},
\cite{giaquintams2},
\cite{hajlasz1},
\cite{hajlasz2},
\cite{hajlasz3},
\cite{HIMO},
\cite{hang},
\cite{hang2},
\cite{hangl1},
\cite{hangl2},
\cite{heleinw},
\cite{pakzad},
\cite{pakzadr}.

In the borderline case, $p=n=\dim M$, it was proved by
Schoen and Uhlenbeck (\cite{schoenu1}, \cite{schoenu2})
that the smooth mappings
$C^{\infty}(M,N)$ form a dense subset of $W^{1,n}(M,N)$, i.e.
$W^{1,n}(M,N)=H^{1,n}(M,N)$,
and White \cite{white1}
proved that the homotopy classes are well defined in $W^{1,n}(M,N)$, see also
\cite{brezisn1}, \cite{brezisn2}.
Indeed, he proved
that for every $u\in W^{1,n}(M,N)$, if two smooth mappings $u_1, u_2\in C^{\infty}(M,N)$
are sufficiently close to $u$ in the Sobolev norm, then $u_1$ and $u_2$ are homotopic.

If $\dim M=\dim N=n$ and both manifolds are oriented,
then the degree of smooth mappings is well defined. There are several equivalent definitions
of the degree and here we consider the one that involves
integration of differential forms. If $\omega$ is a volume form on
$N$, then for $f\in C^\infty(M,N)$ we define
$$
\deg f=\left(\int_M f^*\omega\right)\bigg/\left(\int_N\omega\right)\, .
$$
Since $f^*\omega$ equals the Jacobian $J_f$ of $f$ (after identification of $n$-forms with
functions),
it easily follows from H\"older's inequality that the degree is continuous in the Sobolev
norm $W^{1,n}$. Finally, the density of smooth mappings in $W^{1,n}(M,N)$ allows us to extend
the degree continuously and uniquely to $\deg:W^{1,n}(M,N)\to\bbbz$.

Results regarding degree and homotopy classes do not, in general, extend to the case of
$W^{1,p}(M,N)$ mappings when $p<n$. This stems from the fact that the radial projection mapping
$u_0(x)=x/|x|$ belongs to $W^{1,p}(B^n,S^{n-1})$ for all $1\leq p<n$.

A particularly interesting class of Sobolev mappings to which we can extend the degree and
the homotopy theory is the Orlicz-Sobolev space $W^{1,P}(M,N)$,
where the Young function $P$ satisfies the so called divergence condition
\begin{equation}
\label{E1}
\int_1^\infty \frac{P(t)}{t^{n+1}}\, dx =\infty\, .
\end{equation}
In particular, $P(t)=t^n$ satisfies this condition, so the space $W^{1,n}(M,N)$
is an example. However, we want the space to be slightly
larger than $W^{1,n}$, and hence we also assume that
\begin{equation}
\label{E1.5}
P(t)=o(t^n)
\quad
\mbox{as $t\to\infty$.}
\end{equation}
In addition to the conditions described here
we require some technical assumptions about $P$; see
Section~\ref{Orlicz}.
Roughly speaking, $u\in W^{1,P}(M,N)$ if $\int_M P(|Du|)<\infty$.
A typical Orlicz-Sobolev space $W^{1,P}$ discussed here contains
$W^{1,n}$, and it is contained in all
spaces $W^{1,p}$ for $p<n$
$$
W^{1,n}\subset W^{1,P}\subset\bigcap_{1\leq p<n} W^{1,p}\, .
$$
A fundamental example of a Young function
that satisfies all required conditions, forms a strictly larger space than $W^{1,n}$,
and strictly smaller than the intersection of all $W^{1,p}$, $1\leq p<n$,
is
$$
P(t)=\frac{t^n}{\log(e+t)}\, .
$$
It is easy to check that $u_0\not\in W^{1,P}(B^{n}, S^{n-1})$ if and only if the
condition \eqref{E1} is satisfied, see \cite[p. 2]{HIMO},
so the presence of the divergence condition
is necessary and sufficient for the exclusion of mappings like $x/|x|$ that can
easily cause topological problems.

The class of Orlicz-Sobolev mappings under the divergence condition has been investigated
in connections to nonlinear elasticity, mappings of finite distortion \cite{IwaniecM} and
degree theory \cite{GrecoISS}, \cite{HIMO}.
Roughly speaking, many results that are true for $W^{1,n}$ mappings have counterparts
in $W^{1,P}$ as well.
In particular, it was proved in \cite{HIMO} that smooth mappings $C^{\infty}(M,N)$
are dense in $W^{1,P}(M,N)$ if $P$ satisfies \eqref{E1}, see Section~\ref{Orlicz}.

We say that a compact connected manifold $N$ without boundary, \mbox{$\dim N=n$,}
is a {\em rational homology sphere}, if it has the same deRham cohomology groups as the
sphere $S^n$.
It has been proved in \cite{HIMO} that if $M$ and $N$ are
smooth compact oriented $n$-dimensional manifolds without boundary and $N$
is {\em not} a rational homology sphere, then
the degree, originally defined on a class of smooth mappings $C^\infty(M,N)$, uniquely
extends to
a continuous function $\deg:W^{1,P}(M,N)\to\bbbz$, see also \cite{GrecoISS} for earlier results.
This is not obvious, because the Jacobian of a $W^{1,P}(M,N)$ mapping is not necessarily
integrable and we cannot easily use estimates of the Jacobian, like in the case of $W^{1,n}$
mappings, to prove continuity of the degree.
The proof given in \cite{HIMO} (cf. \cite{GrecoISS}) is not very geometric and it is based on
estimates of integrals of differential forms, Hodge decomposition, and the study of
the so called Cartan forms. Surprisingly, it has also been shown in \cite{HIMO} that
the degree is not continuous in $W^{1,P}(M,S^n)$.
Thus the results of \cite{HIMO} provide a good understanding of the situation when $N$ is
not a rational homology sphere or when $N=S^n$. There is, however, a large class
of rational homology spheres which are not homeomorphic to $S^n$ and it is natural to
ask what happens for such manifolds.

In this paper we give a complete answer to this problem.
In our proof we do not rely on methods of \cite{HIMO} and we provide a new geometric
argument that avoids most of the machinery of differential forms developed in \cite{HIMO}.
\begin{theorem}
\label{T3}
Let $M$ and $N$ be compact oriented $n$-dimensional
Riemannian manifolds, $n\geq 2$,
without boundary, and let $P$ be
a Young function satisfying conditions \eqref{E1}, \eqref{E1.5},
\eqref{doubling cond} and \eqref{growth cond 2}.
Then the degree is well defined, integer valued
and continuous in the space $W^{1,P}(M,N)$
if and only if the universal cover of $N$ is not a rational homology
sphere.
\end{theorem}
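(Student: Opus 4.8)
The plan is to prove the two implications separately. For the ``only if'' direction, suppose the universal cover $\widetilde N$ is a rational homology sphere; I want to construct a sequence $u_j\in C^\infty(M,N)$ converging in $W^{1,P}$ to a limit $u$ with $\deg u_j$ not eventually constant, showing the degree cannot extend continuously. The model case is $N=S^n$ (the case treated in \cite{HIMO}): one inserts a small ``bubble'' of degree $1$ on a ball of radius $\eps$, and because $P(t)=o(t^n)$ the $W^{1,P}$-energy of a bubble scaled to size $\eps$ tends to $0$ even though its degree contribution is $1$. For a general rational homology sphere $N$ with $\widetilde N$ also a rational homology sphere, the key point is that the universal covering $\pi\colon\widetilde N\to N$ has finite degree $k=|\pi_1(N)|$ (finiteness of $\pi_1$ follows because a rational homology sphere has vanishing first Betti number, and more to the point $\widetilde N$ being a rational homology sphere forces $\widetilde N$ compact hence $\pi_1(N)$ finite); one then builds a map of nonzero degree from a small ball into $N$ by lifting the standard degree-one map $B^n\to S^n\cong_{\mathbb Q}\widetilde N$ — more carefully, one uses that the Hurewicz-type argument gives a map $S^n\to N$ of nonzero degree supported on a small ball after pulling back a generator of $H^n(N;\mathbb Q)$. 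Attaching such a bubble of small radius $\eps$ produces $u_\eps\to u$ in $W^{1,P}$ with $\deg u_\eps$ jumping by a fixed nonzero amount, so the degree is not continuous.

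For the ``if'' direction — the substantial half — assume $\widetilde N$ is \emph{not} a rational homology sphere; I must show $\deg$ extends continuously to $W^{1,P}(M,N)$. Since $C^\infty(M,N)$ is dense in $W^{1,P}(M,N)$ by \cite{HIMO}, it suffices to show: if $u_j, v_j\in C^\infty(M,N)$ both converge to the same $u$ in $W^{1,P}$, then $\deg u_j=\deg v_j$ for large $j$; equivalently, $\deg$ is locally constant on $C^\infty(M,N)$ in the $W^{1,P}$-metric, with a modulus controlled only by the $W^{1,P}$-distance. The geometric idea I would pursue: the degree of a smooth map $f\colon M\to N$ can be detected by pulling back, not a top form, but a suitable closed form or cohomology class that ``sees'' the universal cover. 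Concretely, when $\widetilde N$ is not a rational homology sphere, $H^n(N;\mathbb Q)=\mathbb Q$ but there is extra topology in $\widetilde N$ in degrees $0<m<n$; the map $N$ is finitely covered, and I would lift $f$ (or rather work with the induced map on a cover of $M$) so that the top-degree class on $N$ becomes a \emph{product} of lower-degree classes on $\widetilde N$ — this is exactly where ``not a rational homology sphere'' is used: it guarantees a nontrivial cup-product decomposition (or a nontrivial class in an intermediate cohomology group) so that $f^*\omega$ can be rewritten as $d$ of something built from pull-backs of lower-degree forms times lower-degree forms. Lower-degree differential forms pulled back under a $W^{1,P}$ map lie in better Lebesgue/Orlicz classes (by Sobolev embedding, a $W^{1,P}$ map has gradient just below $L^n$, and wedge products of $\le n-1$ such forms are integrable with room to spare), so the integral $\int_M f^*\omega$ becomes continuous in the $W^{1,P}$-norm even though the raw Jacobian is not integrable.

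More precisely, the technical engine I would set up is an integration-by-parts / Stokes identity: writing $\omega=d\eta$ is impossible globally on $N$, but on the universal cover one can arrange $\pi^*\omega$ to be cohomologous to a form that is a finite sum of wedge products $\alpha_i\wedge\beta_i$ with $\deg\alpha_i,\deg\beta_i<n$, hence $\pi^*\omega - \sum\alpha_i\wedge\beta_i = d\gamma$; pulling back by the lift $\tilde f$ of $f$ and integrating over a fundamental domain (or over the finite cover $\widetilde M\to M$ corresponding to $f^{-1}$ of the cover), the $d\gamma$ term vanishes by Stokes and the remaining terms $\int \tilde f^*\alpha_i\wedge \tilde f^*\beta_i$ are continuous in $W^{1,P}$ because each factor is a pull-back of a bounded form by a map whose derivative is controlled in an Orlicz class slightly weaker than $L^n$, and a product of $n$ such factors with individual degrees summing to $n$ but each strictly less than $n$ sits in $L^1$ with a small gain — this is where condition \eqref{E1} (the divergence condition) enters to upgrade ``barely failing $L^1$'' to genuine convergence, just as in \cite{HIMO}. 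The main obstacle, and the place I expect the real work to be, is making the cohomological decomposition of $\pi^*\omega$ into lower-degree products both (a) exist precisely under the hypothesis that $\widetilde N$ is not a rational homology sphere — this should follow from Poincaré duality on $\widetilde N$: if all intermediate rational cohomology vanished $\widetilde N$ would be a rational homology sphere, so some $H^m(\widetilde N;\mathbb Q)\ne 0$ with $0<m<n$, and duality pairs it with $H^{n-m}$ to hit the top class — and (b) transfer back from the cover to $M$ itself so that the degree (a global invariant on $M$) is expressed through these pullbacks, which requires care with the covering group action and the relation $\deg f = \deg\tilde f / k$ or an averaging over sheets. Handling the Orlicz-space estimates for wedge products of pull-backs under the weak-$L^n$ control — in particular proving the needed equi-integrability to pass to the limit — is routine in spirit but will need the precise technical conditions \eqref{doubling cond} and \eqref{growth cond 2} on $P$.
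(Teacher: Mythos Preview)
Your ``only if'' direction is essentially correct and matches the paper's construction: both build bubbles from a map $S^n\to N$ of nonzero degree (whose existence when $\widetilde N$ is a rational homology sphere is exactly Theorem~\ref{T5}), concentrate them on shrinking caps, and use $P(t)=o(t^n)$ with doubling to drive the $W^{1,P}$-energy to zero.

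Your ``if'' direction has a real gap. You propose to lift to $\widetilde N$, invoke Poincar\'e duality there to decompose the top cohomology class as a cup product of lower-degree classes, and so rewrite the degree integral in a form continuous in $W^{1,P}$ --- essentially the Cartan-form method of \cite{HIMO} transplanted to the universal cover. But ``$\widetilde N$ is not a rational homology sphere'' does \emph{not} force $\widetilde N$ to have nontrivial intermediate cohomology: it may simply be non-compact. The paper's own examples $N=\bbbr^{2n+1}/\Gamma$ are rational homology spheres with contractible universal cover $\widetilde N=\bbbr^{2n+1}$; here there is no cup-product decomposition to exploit, $\pi_1(N)$ is infinite so any cover of $M$ on which $f$ lifts is infinite-sheeted, and any primitive $\gamma$ with $p^*\omega=d\gamma$ is unbounded --- the Orlicz estimates do not survive integration over an infinite cover against an unbounded form. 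This non-compact case is precisely what separates the present theorem from the result already in \cite{HIMO}, and your outline does not address it.

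The paper's route is entirely different and avoids the form machinery. It embeds $W^{1,P}(M,N)\hookrightarrow H^{1,n-1}(M,N)$ and proves continuity of the degree there (Theorem~\ref{T2}) by a short geometric argument: White's lemma makes two nearby smooth maps $(n-1)$-homotopic; over each $n$-simplex $\Delta$ the homotopy together with the two maps assembles into a map $\partial(\Delta\times[0,1])\cong S^n\to N$, and Theorem~\ref{T5} says this map has degree zero precisely because $\widetilde N$ is not a rational homology sphere. Summing the resulting identities over all simplices, the lateral contributions cancel in pairs and the degrees agree. Compactness of $\widetilde N$ never enters.
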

The theorem should be understood as follows: if the universal cover of $N$
is not a rational homotopy sphere, then $\deg:C^\infty(M,N)\to\bbbz$
is continuous in the norm of $W^{1,P}$ and since smooth mappings are dense in
$W^{1,P}(M,N)$ the degree uniquely and continuously extends to
$\deg:W^{1,P}(M,N)\to\bbbz$.
On the other hand, if the universal cover of $N$ is a rational homology sphere,
then there is a sequence of smooth mappings $u_k\in C^\infty(M,N)$ with
$\deg u_k=d>0$ such that $u_k$ converges to a constant mapping (and hence of degree zero)
in the norm of $W^{1,P}$.

If $n=4$, Theorem~\ref{T3} together with
Proposition~\ref{escztery} give

\begin{corollary}
\label{T3.5}
Let $M$ and $N$ be compact oriented $4$-dimensional Riemannian manifolds
without boundary and let $P$ be
a Young function satisfying conditions \eqref{E1}, \eqref{E1.5},
\eqref{doubling cond} and \eqref{growth cond 2} with $n=4$.
Then the degree is well defined, integer valued
and continuous in the space $W^{1,P}(M,N)$
if and only if $N$is not homeomorphic to $S^4$.
\end{corollary}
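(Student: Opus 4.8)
The plan is to deduce the corollary from Theorem~\ref{T3} by showing that, for a compact oriented $4$-manifold $N$ without boundary, the universal cover $\widetilde N$ is a rational homology sphere if and only if $N$ itself is homeomorphic to $S^4$. One direction is trivial: if $N\cong S^4$ then $N$ is simply connected, so $\widetilde N=N=S^4$, which is of course a rational homology sphere, and Theorem~\ref{T3} gives discontinuity of the degree. The substance is the converse, which I expect is exactly the content of the (not-yet-seen) Proposition~\ref{escztery}: assume $\widetilde N$ is a rational homology $4$-sphere and conclude $N\cong S^4$.

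First I would argue that $N$ is simply connected. Since $\widetilde N\to N$ is a covering with deck group $\pi_1(N)$ and $\widetilde N$ is a closed $4$-manifold (compactness of $\widetilde N$ is itself a point to check: a rational homology sphere is by the definition given in the excerpt a \emph{compact} manifold, so the covering is finite-sheeted), we have $\chi(\widetilde N)=|\pi_1(N)|\cdot\chi(N)$. A rational homology $4$-sphere has Betti numbers $1,0,0,0,1$, so $\chi(\widetilde N)=2$. If $N$ were not simply connected, $|\pi_1(N)|\ge 2$, forcing $\chi(N)\le 1$; but a closed oriented $4$-manifold has $\chi(N)=2-2b_1(N)+b_2(N)$, and combined with Poincar\'e duality and the fact that a finite cover of an aspherical-free situation — more cleanly, one uses that for a closed oriented $4$-manifold $\chi(N)\equiv b_2(N)\pmod 2$ while $\chi(N)=2$ would be forced — a short case analysis on $b_1(N)$ rules out $\chi(N)\le 1$ unless $\pi_1(N)$ is finite and then $\widetilde N$ being a rational homology sphere pins down $\chi(N)=2/|\pi_1(N)|$, an integer only when $|\pi_1(N)|\in\{1,2\}$; the case $|\pi_1(N)|=2$ gives $\chi(N)=1$, impossible for a closed oriented $4$-manifold (its intersection form is unimodular, so $b_2$ and $\chi$ have the right parity). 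Hence $\pi_1(N)$ is trivial and $N=\widetilde N$ is a simply connected rational homology $4$-sphere.

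Now $N$ is a simply connected closed $4$-manifold with $H_*(N;\bbbz)$ torsion-free (by Poincar\'e duality and the universal coefficient theorem, a simply connected $4$-manifold has $H_1=0$, hence $H^2$ torsion-free, hence $H_2$ torsion-free) and $b_2(N)=0$; thus $N$ is an integral homology $4$-sphere. By Whitehead's theorem and Hurewicz, the map $N\to S^4$ collapsing the $3$-skeleton is a homology isomorphism between simply connected spaces, hence a weak homotopy equivalence, so $N$ is homotopy equivalent to $S^4$. Finally, invoking Freedman's classification of simply connected topological $4$-manifolds — a simply connected closed $4$-manifold is determined up to homeomorphism by its intersection form, and the trivial form is realized only by $S^4$ — we conclude $N$ is homeomorphic to $S^4$. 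The main obstacle is precisely this last step: one genuinely needs Freedman's theorem (the smooth $4$-dimensional Poincar\'e conjecture being open), which is why the conclusion is stated up to homeomorphism rather than diffeomorphism; everything before it is elementary algebraic topology of $4$-manifolds (Euler characteristic of covers, Poincar\'e duality, unimodularity of the intersection form). With $N\cong S^4$ established, Theorem~\ref{T3} applies verbatim and yields the stated equivalence.
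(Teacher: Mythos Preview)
Your overall strategy is exactly the paper's: reduce Corollary~\ref{T3.5} to Theorem~\ref{T3} via the equivalence ``$\widetilde N$ is a rational homology $4$-sphere $\Leftrightarrow$ $N\cong S^4$'' (which is Proposition~\ref{escztery}), and then for the nontrivial direction pass from simply connected rational homology $4$-sphere to integral homology sphere (UCT, Poincar\'e duality), to homotopy sphere (Whitehead/Hurewicz), to $S^4$ (Freedman). That part of your argument matches the paper and is fine.

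The gap is in your proof that $N$ is simply connected. You assert that $\chi(N)=1$ is impossible for a closed oriented $4$-manifold, invoking ``$b_2$ and $\chi$ have the right parity.'' But the parity relation is $\chi(N)=2-2b_1(N)+b_2(N)\equiv b_2(N)\pmod 2$, which is perfectly consistent with $\chi(N)=1$ and $b_2(N)$ odd; indeed $S^1\times S^3\,\#\,\mathbb{CP}^2$ is a closed oriented $4$-manifold with $\chi=0+3-2=1$. So the claim is false as stated, and your case $|\pi_1(N)|=2$ is not ruled out by parity alone.

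What is missing is the input that $N$ \emph{itself} is a rational homology sphere. The paper establishes this first (Theorem~\ref{T6}(a)) via the transfer homomorphism $\tau_*:H^k(\widetilde N)\to H^k(N)$ for a finite cover, which satisfies $\tau_*\circ p^*=\gamma\cdot\mathrm{id}$ and hence is surjective; since $H^k(\widetilde N)=0$ for $0<k<4$, the same holds for $N$. Once you know $b_1(N)=b_2(N)=b_3(N)=0$, you get $\chi(N)=2$ directly, and then $\chi(\widetilde N)=|\pi_1(N)|\cdot\chi(N)$ forces $|\pi_1(N)|=1$. With that repair your argument goes through and coincides with the paper's.
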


The next result concerns the definition of homotopy classes in $W^{1,P}(M,N)$.
\begin{theorem}
\label{T4}
Let $M$ and $N$ be two compact Riemannian manifolds
without boundary, $n=\dim M\geq 2$, and let $P$ be
a Young function satisfying the conditions \eqref{E1},
\eqref{E1.5}, \eqref{doubling cond} and \eqref{growth cond 2}.
If $\pi_n(N)=0$, then the homotopy classes are well defined in $W^{1,P}(M,N)$.
If $\pi_n(N)\neq 0$, the homotopy classes cannot be
well defined in $W^{1,P}(S^n,N)$.
\end{theorem}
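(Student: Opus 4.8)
The plan is to split into two cases according to whether $\pi_n(N)=0$ or $\pi_n(N)\neq 0$, and in each case to reduce the Orlicz-Sobolev statement to a statement about smooth mappings, using density of $C^\infty(M,N)$ in $W^{1,P}(M,N)$ from \cite{HIMO}. The meaning of ``homotopy classes are well defined'' is: there is $\eps>0$ such that whenever $u_1,u_2\in C^\infty(M,N)$ satisfy $\|u_1-u\|_{W^{1,P}}<\eps$ and $\|u_2-u\|_{W^{1,P}}<\eps$ for some $u\in W^{1,P}(M,N)$, then $u_1$ and $u_2$ are homotopic; equivalently, the map sending a smooth mapping to its homotopy class is locally constant in the $W^{1,P}$ metric, hence extends to $W^{1,P}(M,N)$.

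For the positive part, assume $\pi_n(N)=0$. First I would recall White's argument for $W^{1,n}$: two smooth maps that are $W^{1,n}$-close agree, up to homotopy, with their restrictions to a generic $(n-1)$-skeleton of a fine triangulation of $M$, and on the $(n-1)$-skeleton closeness in $W^{1,n}$ (via Fubini/slicing and Morrey-type estimates on low-dimensional skeleta) forces the restrictions to be homotopic; the extension across $n$-cells is then unobstructed precisely because obstructions live in $\pi_n(N)$. The key point is that the same slicing argument goes through in $W^{1,P}$: since $P$ satisfies the divergence condition \eqref{E1}, the trace/restriction of a $W^{1,P}$ map to generic $k$-dimensional subskeleta is controlled, and in fact $W^{1,P}$ maps restricted to generic $1$-skeleta are continuous with modulus of continuity controlled by the $W^{1,P}$ energy — this is exactly the mechanism that makes $u_0(x)=x/|x|$ fail to lie in $W^{1,P}(B^n,S^{n-1})$. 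So the comparison of homotopy types of $u_1$ and $u_2$ takes place on the $(n-1)$-skeleton, where $W^{1,P}\subset W^{1,p}$ for $p$ close to $n$ (on an $(n-1)$-dimensional complex $p=n-1<\dim$... ) gives enough regularity, and then $\pi_n(N)=0$ removes the only obstruction to homotoping the extensions. I would cite the relevant restriction/density lemmas from Section~\ref{Orlicz} and from \cite{HIMO} rather than redo the slicing estimates.

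For the negative part, assume $\pi_n(N)\neq 0$ and work on $M=S^n$. Pick a non-nullhomotopic smooth map $f:S^n\to N$ representing a nontrivial element of $\pi_n(N)$. The idea is to produce a sequence $f_k\in C^\infty(S^n,N)$, each homotopic to $f$ (hence not nullhomotopic), converging in $W^{1,P}$ to a constant map (which is nullhomotopic); this shows the homotopy class is not continuous, hence not well defined, in $W^{1,P}(S^n,N)$. Concretely, let $f_k$ agree with $f$ on a small geodesic ball $B(x_0,r_k)\subset S^n$ suitably rescaled — i.e. $f_k=f\circ\Phi_k$ where $\Phi_k:S^n\to S^n$ is a degree-one map that compresses the complement of $B(x_0,r_k)$ to a neighborhood of a point and maps $B(x_0,r_k)$ onto the complement of a small ball. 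Then $f_k\simeq f$, while $Df_k$ is large only on the shrinking ball $B(x_0,r_k)$; choosing $r_k\to 0$ and using $P(t)=o(t^n)$ from \eqref{E1.5}, the energy $\int_{S^n}P(|Df_k|)$ concentrates on a set of measure $\sim r_k^n$ with $|Df_k|\sim r_k^{-1}$ there, giving $\int P(|Df_k|)\lesssim r_k^n P(C/r_k)=o(1)$ by \eqref{E1.5}, while outside the ball the energy also tends to zero; and $f_k\to\text{const}$ in $L^P$. The main obstacle here is to carry out the construction so that $f_k$ is genuinely smooth and stays in $N$ while achieving the energy bound — this is the standard ``bubble concentration'' construction, and the whole point of \eqref{E1.5} versus \eqref{E1} is that concentration is cheap in $W^{1,P}$ but a genuine topological obstruction (degree, or a nontrivial homotopy class) cannot be created or destroyed by such concentration unless the target allows it. I expect the positive part to be the more delicate one, since it requires the restriction-to-skeleton machinery to function in the Orlicz setting, whereas the negative part is a direct construction once \eqref{E1.5} is invoked.
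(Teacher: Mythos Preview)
Your negative part is correct and coincides with the paper's construction: precompose a smooth nontrivial $G:S^n\to N$ with the cap-stretching maps $g_k$ of Example~\ref{ex 3.3}, and use $P(t)=o(t^n)$ together with doubling to get $\int_{S^n} P(|D(G\circ g_k)|)\leq P(Ck)|C_k|\leq C'P(k)k^{-n}\to 0$.

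For the positive part you take a harder route than the paper, and your sketch is muddled about which hypothesis on $P$ does the work. You propose to redevelop White's restriction-to-skeleton argument directly in the Orlicz setting, invoking the divergence condition \eqref{E1} to control traces on generic skeleta. The paper instead uses a one-line reduction: condition \eqref{growth cond 2} (not the divergence condition) gives the continuous embedding $W^{1,P}(M,N)\hookrightarrow W^{1,\alpha}(M,N)\hookrightarrow W^{1,n-1}(M,N)$ for some $\alpha>n-1$, and combined with the density Lemma~\ref{T7} this yields $W^{1,P}(M,N)\hookrightarrow H^{1,n-1}(M,N)$. Then Theorem~\ref{T1} with $p=n-1$ applies directly: two smooth maps close in $W^{1,P}$ are close in $H^{1,n-1}$, hence homotopic when $\pi_n(N)=0$. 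This gets White's lemma in its classical $W^{1,p}$ form for free, with no Orlicz slicing machinery needed. Your parenthetical ``$W^{1,P}\subset W^{1,p}$ for $p$ close to $n$'' is groping toward exactly this embedding, but you never commit to it as the entire mechanism, and you misattribute it to \eqref{E1} rather than to \eqref{growth cond 2}. Your approach could presumably be made to work, but the restriction lemmas you plan to cite from Section~\ref{Orlicz} are not actually there; the paper's reduction to Theorem~\ref{T1} sidesteps the issue entirely.
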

Theorem~\ref{T4} has been announced in \cite[p.5]{HIMO}, but no details of
the proof have been provided.
It should be understood in a similar way as Theorem~\ref{T3}.
Let $\pi_n(N)=0$. Then for every
$u\in W^{1,P}(M,N)$ there is $\eps>0$ such that if $u_1,u_2\in C^\infty(M,N)$,
$\Vert u-u_i\Vert_{1,P}<\eps$, $i=1,2$, then $u_1$ and $u_2$ are homotopic.
Since smooth mappings are dense in $W^{1,P}(M,N)$, homotopy classes of smooth
mappings can be uniquely extended to homotopy classes of $W^{1,P}(M,N)$
mappings.
On the other hand, if $\pi_n(N)\neq 0$, there is a sequence of smooth mappings
$u_k\in C^\infty(S^n,N)$ that converges to a constant mapping in the norm of
$W^{1,P}$ and such that the mappings $u_k$ are not homotopic to a constant mapping.

The condition $\pi_n(N)=0$ is not necessary: any two continuous mappings
from $\mathbb{CP}^2$ to $\mathbb{CP}^1$ are homotopic (in particular -- homotopic to
a constant mapping), even though $\pi_4(\mathbb{CP}^1)=\pi_4(S^2)=\mathbb{Z}_2$.

Theorems~\ref{T3} and~\ref{T4} will be proved from corresponding results for
$W^{1,p}$ mappings.
\begin{theorem}
\label{T1}
Let $M$ and $N$ be two compact Riemannian manifolds without boundary,
$\dim M=n$, $n-1\leq p<n$. Then the homotopy classes are well defined in $H^{1,p}(M,N)$
if $\pi_n(N)=0$, and they cannot be well defined in $H^{1,p}(S^n,N)$,
if $\pi_n(N)\neq 0$. If $\pi_{n-1}(N)=\pi_n(N)=0$, then the homotopy classes
are well defined in $W^{1,p}(M,N)$.
\end{theorem}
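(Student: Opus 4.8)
The plan is to reduce everything to statements about the $n$-skeleton and to the density theorem for smooth mappings. The key technical tool is the following approximation fact, which I would establish first: if $u\in H^{1,p}(M,N)$ with $n-1\le p<n$, then $u$ can be approximated in $W^{1,p}$ by mappings that are smooth outside a finite union of $(n-p)$-dimensional (hence, since $p\ge n-1$, at most $1$-dimensional) subsets of $M$, and in fact — after a small perturbation and using a Fubini/slicing argument on generic $(n-1)$-dimensional skeleta of a fine triangulation of $M$ — the restriction of $u$ to the $(n-1)$-skeleton $M^{(n-1)}$ of a generic fine triangulation is continuous (in fact $W^{1,p}\cap C^0$ after the standard Federer--Fleming / Schoen--Uhlenbeck type projection argument), and the homotopy class of that restriction is stable under $W^{1,p}$-small perturbations. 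This is where I expect the real work to be, and it is essentially the heart of the matter: one needs the precise statement that for two smooth maps $u_1,u_2$ sufficiently $W^{1,p}$-close to $u$, their restrictions to a common generic $(n-1)$-skeleton are homotopic as continuous maps $M^{(n-1)}\to N$. For $p\ge n-1$ this follows because on the $(n-1)$-skeleton $W^{1,p}$ embeds into VMO (when $p=n-1$) or into $C^0$ (when $p>n-1$) after restriction, and the homotopy type of continuous — or VMO, by Brezis--Nirenberg — maps from an $(n-1)$-complex is locally constant in these topologies.

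Granting that, the first assertion follows quickly. Suppose $\pi_n(N)=0$. Given $u\in H^{1,p}(M,N)$, choose $\eps$ so small that any two smooth maps within $\eps$ of $u$ have homotopic restrictions to a fixed generic $(n-1)$-skeleton $M^{(n-1)}$. Since $\pi_n(N)=0$, a homotopy on the $(n-1)$-skeleton extends over the $n$-cells: the obstruction to extending a homotopy from $M^{(n-1)}$ to $M^{(n)}=M$ lies in $H^n(M;\pi_n(N))=0$. Hence $u_1\simeq u_2$ on all of $M$, and we may unambiguously define the homotopy class of $u$ as that of any sufficiently close smooth map; density of $C^\infty(M,N)$ in $H^{1,p}(M,N)$ makes this well defined and locally constant, so homotopy classes are well defined on $H^{1,p}(M,N)$.

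For the second assertion, suppose $\pi_n(N)\ne 0$ and pick a smooth $\phi:S^n\to N$ representing a nontrivial class. The plan is the standard dipole/bubbling construction: precompose with a degree-one map $S^n\to S^n$ that concentrates all the ``activity'' in a ball of radius $r$, i.e. set $u_r=\phi\circ\psi_r$ where $\psi_r$ collapses the complement of $B(x_0,r)$ to a point and maps $B(x_0,r)$ onto $S^n$ with controlled derivative $|D\psi_r|\lesssim 1/r$ on an annulus of width comparable to $r$. Then $\int_{S^n}|Du_r|^p\lesssim r^{-p}\cdot r^n=r^{n-p}\to 0$ as $r\to 0$ because $p<n$, so $u_r\to\text{const}$ in $W^{1,p}(S^n,N)$ (and each $u_r$ is smooth, hence in $H^{1,p}$), yet each $u_r$ is homotopic to $\phi$ and thus not null-homotopic. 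Therefore no locally constant assignment of homotopy classes extending the one on $C^\infty(S^n,N)$ can exist on $H^{1,p}(S^n,N)$.

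Finally, the last sentence: if in addition $\pi_{n-1}(N)=0$, then the same scheme applies to all of $W^{1,p}(M,N)$, not just its subspace $H^{1,p}(M,N)$. The point is that when $\pi_{n-1}(N)=0$, every $u\in W^{1,p}(M,N)$ with $p\ge n-1$ actually lies in $H^{1,p}(M,N)$: the obstruction to approximating $u$ by smooth maps — which by the Hang--Lin characterization is precisely a cohomological obstruction living in a group built from $\pi_{n-1}(N)$ (the restriction of $u$ to a generic $(n-1)$-skeleton defines, via its values, an element whose vanishing is governed by $\pi_{n-1}(N)$) — disappears. Hence $W^{1,p}(M,N)=H^{1,p}(M,N)$ and the first part of the theorem gives well-defined homotopy classes on the whole space. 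I expect the citation-level assembly here (quoting Hang--Lin, White, Brezis--Nirenberg VMO-degree theory, and the Federer--Fleming projection) to be routine; the genuinely load-bearing step remains the generic-skeleton homotopy-stability lemma described above.
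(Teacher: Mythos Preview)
Your proposal is correct and follows essentially the same route as the paper: the ``generic-skeleton homotopy-stability lemma'' you identify as the load-bearing step is precisely White's theorem (quoted in the paper as Lemma~\ref{thm white}), which the paper invokes directly rather than reproving; the extension of the $(n-1)$-skeletal homotopy via $\pi_n(N)=0$, the bubbling counterexample for $\pi_n(N)\neq 0$, and the appeal to Hang--Lin density when $\pi_{n-1}(N)=0$ all match the paper's argument. In short, you have reconstructed the paper's proof, with White's lemma sketched rather than cited.
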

The theorem should be understood in a similar way as Theorem~\ref{T4}.
The last part of the theorem follows from the results of Bethuel \cite{bethuel}
and Hang and Lin \cite{hangl2} according to which
the condition $\pi_{n-1}(N)=0$ implies density of
smooth mappings in $W^{1,p}(M,N)$, so $H^{1,p}(M,N)=W^{1,p}(M,N)$.
\begin{theorem}
\label{T2}
Let $M$ and $N$ be compact connected oriented $n$-dimensional
Riemannian manifolds without
boundary, $n-1\leq p<n$, $n\geq 2$. Then the degree is well defined,
integer valued and continuous in the space
$H^{1,p}(M,N)$ if and only if the universal cover of $N$ is not a rational homology sphere.
If the universal cover of $N$ is not a rational homology sphere and $\pi_{n-1}(N)=0$,
then degree is well defined, integer valued and continuous in $W^{1,p}(M,N)$.
\end{theorem}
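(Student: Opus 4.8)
The plan is to prove both implications, in each case relating the hypothesis on $N$ to the existence of maps $S^n\to N$ of nonzero degree. The first ingredient is a purely topological observation: \emph{the universal cover $\tilde N$ is a rational homology sphere if and only if some continuous map $S^n\to N$ has nonzero degree}, equivalently if and only if the image of the Hurewicz homomorphism $\pi_n(N)\to H_n(N;\mathbb Q)=\mathbb Q$ is nonzero. Indeed, if $f\colon S^n\to N$ has nonzero degree then (as $n\ge2$) it lifts to $\tilde f\colon S^n\to\tilde N$ with $f=\pi\circ\tilde f$; since $\tilde f(S^n)$ is compact, were $\tilde N$ noncompact then $\tilde f$ would be nonsurjective, hence nullhomologous, forcing $\deg f=0$ — so $\tilde N$ is a closed oriented $n$-manifold, and the transfer (Umkehr) map argument in $\mathbb Q$-homology applied to $\tilde f$ gives $H_i(\tilde N;\mathbb Q)=0$ for $0<i<n$. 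Conversely, if $\tilde N$ is a rational homology sphere then it is closed and simply connected, the rational Hurewicz theorem produces a class in $\pi_n(\tilde N)\otimes\mathbb Q$ mapping onto $H_n(\tilde N;\mathbb Q)$, and this class pushes forward nontrivially along the finite covering $\pi\colon\tilde N\to N$ (which has degree $|\pi_1(N)|\neq0$), so the Hurewicz image in $H_n(N;\mathbb Q)$ is nonzero. Since every continuous $S^n\to N$ represents a class in $[S^n,N]=\pi_n(N)/\pi_1(N)$ and the degree depends only on the (basepoint-independent) Hurewicz image, we conclude that if $\tilde N$ is \emph{not} a rational homology sphere then every continuous map $S^n\to N$ has degree $0$.

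For the "only if" direction I argue by contraposition: suppose $\tilde N$ is a rational homology sphere, and exhibit smooth maps converging in $W^{1,p}$ with non-constant degree. By the observation above there is a smooth $g\colon S^n\to N$ of degree $d\neq0$ with $g(*)=y_0$ for a basepoint $*\in S^n$. Fix $x_0\in M$ with a coordinate ball around it, fix a smooth $\sigma\colon B(0,1)\to S^n$ equal to $*$ near $\partial B(0,1)$ and of degree $1$, and for $r_k\to0$ define $u_k\in C^\infty(M,N)$ to equal $y_0$ on $M\setminus B(x_0,r_k)$ and $g(\sigma(x/r_k))$ for $x$ in the chart. Then $\deg u_k=d$, while $\int_M|Du_k|^p\le C r_k^{-p}|B(x_0,r_k)|\le C' r_k^{\,n-p}\to0$ since $p<n$, and $\|u_k-y_0\|_{L^p}\to0$ trivially; hence $u_k\to y_0$ in $W^{1,p}(M,N)$ although $\deg u_k=d\neq0=\deg y_0$. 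As the $u_k$ and the constant limit all lie in $C^\infty(M,N)\subset H^{1,p}(M,N)$, the map $\deg\colon C^\infty(M,N)\to\mathbb Z$ is not $W^{1,p}$-continuous and does not extend to $H^{1,p}(M,N)$.

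For the "if" direction, assume $\tilde N$ is not a rational homology sphere; it suffices to show that if $u_k,u_j\in C^\infty(M,N)$ are sufficiently $W^{1,p}$-close to a fixed $u$ (with $n-1\le p<n$) then $\deg u_k=\deg u_j$, after which density of $C^\infty(M,N)$ in $H^{1,p}(M,N)$ extends $\deg$ continuously (and, being a limit of integers, integer-valued) to $H^{1,p}(M,N)$. Here I would use the key mechanism behind Theorem~\ref{T1} — White's homotopy-stability argument combined with Fubini-type slicing of Sobolev maps — which shows that for $p\ge n-1$ and a generic fine triangulation $\mathcal{T}$ of $M$, the homotopy class of the restriction to the $(n-1)$-skeleton $M^{(n-1)}$ is stable under small $W^{1,p}$-perturbations; this part uses neither $\pi_n(N)=0$ nor orientability. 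Hence $u_k|_{M^{(n-1)}}\simeq u_j|_{M^{(n-1)}}$, and after replacing $u_j$ by a homotopic map (which preserves $\deg u_j$) we may assume $u_k=u_j$ on $M^{(n-1)}$. Then for each $n$-simplex $\sigma$ of $\mathcal{T}$, gluing $u_k|_\sigma$ and $u_j|_\sigma$ along $\partial\sigma$ gives a piecewise smooth $w_\sigma\colon S^n\to N$ with $\int_\sigma u_k^*\omega-\int_\sigma u_j^*\omega=\int_{S^n}w_\sigma^*\omega=(\deg w_\sigma)\int_N\omega$; summing over $\sigma$ yields $\deg u_k-\deg u_j=\sum_\sigma\deg w_\sigma$, and each term vanishes by the observation of the first paragraph. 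Finally, if in addition $\pi_{n-1}(N)=0$, the density theorem of Bethuel and Hang--Lin quoted above gives $H^{1,p}(M,N)=W^{1,p}(M,N)$, and the conclusion holds on all of $W^{1,p}(M,N)$. The main obstacle is the skeleton step: establishing that the homotopy class of $u|_{M^{(n-1)}}$ for a generic $(n-1)$-skeleton is insensitive to small $W^{1,p}$-perturbations, especially in the critical case $p=n-1$ where $W^{1,n-1}$ of the skeleton fails to embed into $C^0$ and one must genuinely use density of smooth maps together with White's theorem — exactly the heart of the known results on $H^{1,p}$ homotopy types.
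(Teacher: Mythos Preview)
Your proof is correct and follows essentially the same route as the paper: the topological equivalence you establish in the first paragraph is exactly the paper's Theorem~\ref{T5}, the shrinking-bubble construction for the ``only if'' direction matches the paper's $F_k=G\circ g_k\circ F$, and the ``if'' direction rests on White's $(n-1)$-homotopy lemma together with the vanishing of $\deg(S^n\to N)$, just as in the paper. The one organizational difference is that you absorb the skeleton homotopy into $u_j$ via the homotopy extension property and then glue $u_k|_\sigma$ with $u_j|_\sigma$ directly over each simplex (so each $\deg w_\sigma=0$ individually), whereas the paper keeps the homotopy $H$ explicit on the prism $\Delta\times[0,1]$ and obtains $\deg g=\deg h$ only after summing and cancelling the boundary contributions $\int_{\partial\Delta\times[0,1]}J_H$ in pairs; your variant is slightly cleaner but not substantively different.
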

Again, Theorem~\ref{T2} should be understood in a similar way as Theorem~\ref{T3} and
the last part of the theorem follows from density
of smooth mappings in $W^{1,p}$, just like in the case of Theorem~\ref{T1}.

As a direct consequence of Theorem~\ref{T2} and Proposition~\ref{escztery}
we have

\begin{corollary}
\label{T2.5}
Let $M$ and $N$ be compact connected oriented $4$-dimensional
Riemannian manifolds without
boundary, $3\leq p<4$. Then the degree is well defined,
integer valued and continuous in the space
$H^{1,p}(M,N)$ if and only if $N$ is not homeomorphic to $S^4$.
If $N$ is not homeomorphic to $S^4$ and $\pi_{3}(N)=0$,
then the degree is well defined, integer valued and continuous in $W^{1,p}(M,N)$.
\end{corollary}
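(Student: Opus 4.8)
The plan is to obtain the corollary as the special case $n=4$ of Theorem~\ref{T2}, using nothing beyond the purely topological translation supplied by Proposition~\ref{escztery}: for a compact oriented $4$-manifold $N$ without boundary, the universal cover $\widetilde N$ of $N$ is a rational homology sphere if and only if $N$ is homeomorphic to $S^4$. Granting this equivalence there is nothing left to do. Indeed, Theorem~\ref{T2} asserts that the degree is well defined, integer valued and continuous on $H^{1,p}(M,N)$ exactly when $\widetilde N$ is not a rational homology sphere, i.e.\ exactly when $N$ is not homeomorphic to $S^4$; and the last sentence of Theorem~\ref{T2}, read with $n=4$ and the same translation, yields the statement about $W^{1,p}(M,N)$ under the additional hypothesis $\pi_3(N)=0$.

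So the only thing that really requires an argument is Proposition~\ref{escztery}, and I would prove it as follows. One implication is immediate: if $N\cong S^4$ then $N$ is simply connected, so $\widetilde N=N\cong S^4$, which is a rational homology sphere. For the converse, suppose $\widetilde N$ is a rational homology $4$-sphere. Since a connected non-compact manifold has vanishing top de Rham cohomology whereas $S^4$ does not, $\widetilde N$ is necessarily compact, hence $G:=\pi_1(N)$ is finite. Now $\widetilde N$ is a closed, \emph{simply connected} (hence orientable) $4$-manifold, so Poincar\'e duality together with the universal coefficient theorem give $H_3(\widetilde N;\bbbz)=0$ and exhibit $H_2(\widetilde N;\bbbz)\cong H^2(\widetilde N;\bbbz)\cong\Hom(H_2(\widetilde N;\bbbz),\bbbz)$ as a free abelian group; the rational homology sphere hypothesis forces its rank to be $0$, hence $H_2(\widetilde N;\bbbz)=0$. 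Thus $\widetilde N$ is a simply connected integral homology $4$-sphere, and by Freedman's solution of the topological $4$-dimensional Poincar\'e conjecture $\widetilde N$ is homeomorphic to $S^4$.

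It then remains to rule out a nontrivial deck group $G$. Since $G$ acts freely on $\widetilde N\cong S^4$ with quotient $N$, multiplicativity of the Euler characteristic gives $\chi(N)=\chi(S^4)/|G|=2/|G|$, a positive integer, so $|G|\in\{1,2\}$; and if $|G|=2$ the nontrivial deck transformation is a fixed-point-free involution $\tau$ of $S^4$, so its Lefschetz number $1+\deg\tau$ vanishes, whence $\deg\tau=-1$. Then $\tau$ reverses orientation, the quotient $N=S^4/\langle\tau\rangle$ is non-orientable, and this contradicts the standing assumption that $N$ is oriented. Hence $|G|=1$ and $N=\widetilde N\cong S^4$, which completes the proof of the equivalence and therefore of the corollary.

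The only substantial external ingredient here is Freedman's topological Poincar\'e theorem in dimension four; the rest is routine algebraic topology, and no genuine obstacle arises in the degree-theoretic part, which is entirely formal once Theorem~\ref{T2} and Proposition~\ref{escztery} are available. What deserves emphasis is that the collapse of ``the universal cover of $N$ is a rational homology sphere'' onto ``$N$ is homeomorphic to $S^4$'' is special to dimension four; already in dimension three odd-order lens spaces $L(p,q)$ are rational homology spheres covered by $S^3$, so Corollary~\ref{T2.5} is a genuinely $4$-dimensional sharpening of Theorem~\ref{T2}.
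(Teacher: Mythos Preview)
Your derivation of the corollary itself is exactly the paper's: the paper simply records that Corollary~\ref{T2.5} is a direct consequence of Theorem~\ref{T2} together with Proposition~\ref{escztery}, and you do the same.

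Where you depart from the paper is in your proof of Proposition~\ref{escztery}. The paper first invokes Theorem~\ref{T6}(b) (the Euler-characteristic argument for even $n$) to conclude at once that $N$ is simply connected, and then works directly with $N$: Universal Coefficients and Poincar\'e duality show $N$ is an integral homology sphere, Whitehead's theorem makes it a homotopy sphere, and Freedman finishes. You instead first run the Poincar\'e duality/UCT argument on $\widetilde N$ to obtain $\widetilde N\cong S^4$, and only afterward eliminate a nontrivial deck group, getting $|G|\in\{1,2\}$ from $\chi(N)=2/|G|$ and ruling out $|G|=2$ via the Lefschetz fixed-point theorem and orientability. Both routes are correct. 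The paper's is a step shorter because once $N$ is known (via Theorem~\ref{T6}(a)) to be a rational homology sphere, $\chi(N)=2$ forces $|G|=1$ immediately, with no need for the Lefschetz argument; your route, on the other hand, is self-contained and does not appeal to Theorem~\ref{T6}.
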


The paper should be interesting mainly for people working in geometric analysis. Since
the proofs employ quite a lot of algebraic topology, we made some effort to
present our arguments from different points of view whenever it was possible.
For example, some proofs were presented both from the perspective of algebraic
topology and the perspective of differential forms.

The main results in the paper are Theorems~\ref{T3}, \ref{T4}, \ref{T1}, \ref{T2},
Corollaries~\ref{T3.5}, \ref{T2.5} and also
Theorem~\ref{T5}.

The paper is organized as follows.
In Section~\ref{RHS} we study basic examples and properties of rational homology spheres.
In Section~\ref{HD} we prove Theorems~\ref{T1} and~\ref{T2}.
The final Section~\ref{Orlicz}
provides a definition of the Orlicz-Sobolev space, its basic properties, and
the proofs of Theorems~\ref{T3}, and \ref{T4}.

\section{Rational homology spheres}
\label{RHS}

In what follows, $H^k(M)$ will stand for deRham cohomology groups. We say that a smooth
$n$-dimensional manifold $M$ without boundary is a
{\em rational homology sphere} if it has the same deRham cohomology as $S^n$, that is
$$
H^k(M)=\begin{cases}\bbbr & \text{ for }k=0 \text{ or }k=n,\\
0&\text{ otherwise}.
\end{cases}
$$
Clearly $M$ must be compact, connected and orientable.

If $M$ and $N$ are smooth compact connected oriented $n$-dimensional manifolds
without boundary, then the {\em degree} of a smooth mapping
$f:M\to N$ is defined by
$$
\deg f=
\frac{\int_M f^*\omega}{\int_N \omega}\, ,
$$
where $\omega$ is any $n$-form on $N$ with $\int_N\omega\neq 0$
($\omega$ is not exact by Stokes' theorem and hence it defines a non-trivial
element in $H^n(M)$).
It is well known that $\deg f\in\bbbz$, that it does not depend on the choice
of $\omega$ and that it
is a homotopy invariant.

The reason why rational homology spheres
play such an important role in the degree theory of Sobolev mappings
stems from the following result.
\begin{theorem}
\label{T5}
Let $M$ be a smooth compact connected oriented $n$-di\-men\-sion\-al manifold without boundary,
$n\geq 2$.
Then there is a smooth mapping $f:S^n\to M$ of nonzero degree if and only if the universal
cover of $M$ is a rational homology sphere.
\end{theorem}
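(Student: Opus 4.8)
The plan is to prove both implications using the lifting of $f:S^n\to M$ to the universal cover $\widetilde{M}$ and the relationship between degree and homology with rational coefficients. Since $n\ge 2$, the sphere $S^n$ is simply connected, so any continuous (in particular smooth) map $f:S^n\to M$ lifts to a map $\widetilde f:S^n\to\widetilde M$; and conversely a map into $\widetilde M$ projects to a map into $M$. The key point is that the covering projection $\pi:\widetilde M\to M$ has local degree $\pm 1$ everywhere, so if $\widetilde M$ is compact then $\deg(\pi\circ g)=\pm(\#\text{sheets})\cdot\deg g$ up to sign conventions; in any case $\deg(\pi\circ g)\ne 0$ whenever $\deg g\ne 0$, and $\deg f=\deg(\pi\circ\widetilde f)$ is a nonzero multiple of $\deg\widetilde f$. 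Thus the existence of a nonzero-degree map $S^n\to M$ is equivalent to the existence of a nonzero-degree map $S^n\to\widetilde M$, \emph{provided} $\widetilde M$ is a compact manifold; I will need to address the case where $\widetilde M$ is noncompact separately, showing that then no nonzero-degree map $S^n\to M$ can exist (a map from a compact manifold into a noncompact one has degree zero, or rather its image is compact and one argues via $H^n$).

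For the direction ``universal cover is a rational homology sphere $\Rightarrow$ nonzero-degree map exists'': if $\widetilde M$ is a rational homology sphere then in particular it is compact, connected, oriented, and $H^n(\widetilde M;\mathbb Q)\cong\mathbb Q$, equivalently (by Hurewicz mod the class of finite groups, or by the rational Hurewicz theorem of Serre) $\pi_n(\widetilde M)\otimes\mathbb Q\ne 0$, so there is a map $g:S^n\to\widetilde M$ with $g_*[S^n]\ne 0$ in $H_n(\widetilde M;\mathbb Q)$, i.e.\ $\deg g\ne 0$. Approximating by a smooth map (which does not change the degree) and composing with $\pi$ gives the desired smooth $f=\pi\circ g:S^n\to M$ of nonzero degree. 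The rational Hurewicz theorem is the cleanest tool here; alternatively one notes a degree-one self-map-like construction is unavailable, so the rational statement is essential.

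For the converse ``nonzero-degree map $f:S^n\to M$ exists $\Rightarrow$ universal cover is a rational homology sphere'': lift $f$ to $\widetilde f:S^n\to\widetilde M$. First, $\widetilde M$ must be compact: otherwise the image $\widetilde f(S^n)$ lies in a compact subset avoiding some point, forcing $\deg f=0$, a contradiction; hence the fundamental group of $M$ is finite and $\widetilde M$ is a closed oriented $n$-manifold. Now $\widetilde f_*[S^n]$ is a nonzero multiple of $[\widetilde M]$ in $H_n(\widetilde M;\mathbb Q)$ (since $\deg f\ne 0$ and the deck group acts, or simply since $\pi_*\widetilde f_*[S^n]=f_*[S^n]=(\deg f)[M]\ne 0$), so the map $H_n(S^n;\mathbb Q)\to H_n(\widetilde M;\mathbb Q)$ is surjective. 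Because $S^n$ has trivial rational homology in degrees $1,\dots,n-1$, and $H_k(\widetilde M;\mathbb Q)$ receives nothing there, one concludes $H_k(\widetilde M;\mathbb Q)=0$ for $0<k<n$ once we know these maps are surjective in those degrees as well --- this is where I must be careful: surjectivity of $\widetilde f_*$ in the \emph{top} degree alone does not immediately kill the middle homology. The right argument invokes Poincar\'e duality on the closed oriented manifold $\widetilde M$: a nonzero-degree map $\widetilde f$ into $\widetilde M$ makes $\widetilde f^*:H^*(\widetilde M;\mathbb Q)\to H^*(S^n;\mathbb Q)$ \emph{injective} (the standard transfer/umkehr argument: if $\widetilde f^*\alpha=0$ then for all $\beta$, $\langle\alpha\smile\beta,[\widetilde M]\rangle$ is read off from $S^n$ and vanishes, so $\alpha=0$ by duality). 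Hence $H^k(\widetilde M;\mathbb Q)$ injects into $H^k(S^n;\mathbb Q)$, which is $0$ for $0<k<n$, giving $H^k(\widetilde M;\mathbb Q)=0$ there; together with connectedness and orientability this says $\widetilde M$ is a rational homology sphere, i.e.\ has the deRham cohomology of $S^n$.

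The main obstacle, and the step deserving the most care, is the converse: one must (i) rule out a noncompact universal cover, and (ii) correctly deduce vanishing of the \emph{middle-dimensional} rational cohomology of $\widetilde M$ from the mere existence of a nonzero-degree map out of a sphere --- the naive ``surjection on $H_n$'' is not enough, and the clean route is the umkehr/Poincar\'e-duality argument showing $\widetilde f^*$ is injective on rational cohomology. I also need to make sure the smoothing step (replacing a continuous map between smooth manifolds by a smooth homotopic one) and the compatibility of topological degree with the deRham-integral degree used in the rest of the paper are invoked, both of which are standard.
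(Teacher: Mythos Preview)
Your argument is correct and follows the same outline as the paper's. The backward direction in the paper also invokes the Hurewicz theorem modulo the Serre class of finite abelian groups to produce a nonzero-degree map $S^n\to\widetilde M$ and then composes with the covering. For the forward direction, the paper's proof is precisely your Poincar\'e-duality/umkehr injectivity argument written in de Rham/Hodge language: it takes a harmonic representative $\alpha$ of a hypothetical nonzero class in $H^k$, pairs it against $*\alpha$, and obtains the contradiction
\[
0=\int_{S^n}\widetilde f^{\,*}\alpha\wedge\widetilde f^{\,*}(*\alpha)
=(\deg\widetilde f)\int_{\widetilde M}|\alpha|^2>0,
\]
which is exactly your ``$\langle\alpha\smile\beta,[\widetilde M]\rangle$ is read off from $S^n$'' step with $\beta=*\alpha$.

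One point to tighten: your compactness argument (``the image $\widetilde f(S^n)$ misses a point of $\widetilde M$, forcing $\deg f=0$'') is not complete as written, since missing a point upstairs does not force $f=\pi\circ\widetilde f$ to miss anything in $M$. The clean version, which you do gesture at, is that for a connected noncompact $n$-manifold one has $H_n(\widetilde M;\bbbz)=0$, hence $f_*[S^n]=\pi_*\widetilde f_*[S^n]=0$ and $\deg f=0$; the paper argues equivalently that $\widetilde f^{\,*}:H^n(\widetilde M)\to H^n(S^n)$ must be surjective, forcing $H^n(\widetilde M)\neq 0$ and thus compactness.
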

\begin{proof}
Let us notice first  that if
a mapping $f:S^n\to M$ of nonzero degree exists,
then $M$ is a rational homology sphere.
Indeed, clearly \mbox{$H^0(M)=H^n(M)=\bbbr$}, because $M$ is compact, connected and oriented.
Suppose that $M$ has non-trivial cohomology in dimension $0<k<n$, that is -- there is a
closed $k$ form $\alpha$ that is not exact.
According to the Hodge Decomposition Theorem, \cite{warner},
we may also assume that $\alpha$ is
coclosed, so $*\alpha$ is closed.
Then, $\omega=\alpha\wedge *\alpha$ is an
$n$-form on $M$ such that
$$
\int_M \omega = \int_M|\alpha|^2 >0\, .
$$
We have
$$
\deg f =
\frac{\int_{S^n} f^*\omega}{\int_M\omega} =
\frac{\int_{S^n} f^*\alpha\wedge f^*(*\alpha)}{\int_M\omega} = 0
$$
which is a contradiction.
The last equality follows from the fact that $H^k(S^n)=0$, hence
the form $f^*\alpha$ is exact, $f^*\alpha =d\eta$. Since $f^*(*\alpha)$ is closed,
we have
$$
\int_{S^n} f^*\alpha\wedge f^*(*\alpha) = \int_{S^n} d(\eta\wedge f^*(*\alpha)) = 0
$$
by Stokes' theorem.

The contradiction proves that $H^k(M)=0$ for all $0<k<n$, so $M$ is a rational
homology sphere.

On the other hand, any such mapping $f$ factors through the uni\-vers\-al cover
$\widetilde{M}$ of $M$, because $S^n$ is simply connected,
see \cite[Proposition~1.33]{hatcher}.
$$
\xymatrix{
&\widetilde{M}\ar[d]^p\\
S^n \ar[ru]^{\tilde{f}} \ar[r]^f &M}
$$
Clearly, $\widetilde{M}$ is orientable and we can choose an orientation in such a way that
$p$ be orientation preserving.

Since $\deg f\neq 0$, $f^*:H^n(M)\to H^n(S^n)$ is an isomorphism.
The factorization gives
$f^*=\tilde{f}^*\circ p^*$, and hence
$\tilde{f}^*:H^n(\widetilde{M})\to H^n(S^n)=\bbbr$ is an isomorphism, so $\widetilde{M}$
is compact. This implies that if $\omega$ is a volume form on $\widetilde{M}$, then
$\int_{S^n}\tilde{f}^*\omega\neq 0$ and hence $\deg \tilde{f}\neq 0$.
Indeed, $\omega$ defines a generator (i.e. a non-zero element) in
$H^n(\widetilde{M})$ and hence $\tilde{f}^*\omega$ defines a generator in $H^n(S^n)$.
If $\eta$ is a volume form on $S^n$, then $\tilde{f}^*\omega=c\eta+d\lambda$, $c\neq 0$,
because $H^n(S^n)=\bbbr$ and the elements $\tilde{f}^*\omega$ and $\eta$ are proportional
in $H^n(S^n)$. Thus $\int_{S^n} \tilde{f}^*\omega=c\int_{S^n}\eta\neq 0$.

The last argument can be expressed differently. Since $\widetilde{M}$ is compact, the number
$\gamma$ of sheets of the covering is finite (it equals $|\pi_n(M)|$, see
\mbox{\cite[Proposition~1.32]{hatcher}}). It is easy to see that
$\deg f =\gamma\deg\tilde{f}$, so $\deg \tilde{f}\neq 0$.

Thus we proved that the mapping $\tilde{f}:S^n\to\widetilde{M}$ has nonzero degree and hence
$\widetilde{M}$ is a rational homology sphere, by the fact obtained at the beginning of
our proof.

We are now left with the proof that if $\widetilde{M}$ is a rational homology sphere,
then there is a mapping from $S^n$ to $M$ of nonzero degree. Clearly, it suffices to
prove that there is a mapping $f:S^n\to\widetilde{M}$ of nonzero degree, since the composition
with the covering map multiplies degree by the (finite) order $\gamma$ of the covering.

To this end we shall employ the Hurewicz theorem mod Serre class $\mathcal{C}$ of all finite
abelian groups (see \cite[Chapter X, Theorem 8.1]{Hu_HT}, also \cite{KlausKreck04},
\cite[Chapter 1, Theorem 1.8]{Hatcher_SS} and \cite[Ch.~9, Sec.~6, Theorem~15]{spanier})
that we state as a lemma.
\begin{lemma}
\label{hurewicz mod c}
Let $\mathcal{C}$ denote the class of all finite abelian groups. If $X$ is a simply
connected space and $n\geq 2$ is an integer such that $\pi_m(X)\in \mathcal{C}$
whenever $1<m<n$, then the natural Hurewicz homomorphism
$$
h_m:\pi_m(X) \to H_m(X,\bbbz)
$$
is a $\mathcal{C}$-isomorphism (i.e. both $\ker h_m$ and $\coker h_m$ lie in $\mathcal{C}$)
whenever \mbox{$0<m\leq n$.}
\end{lemma}
Since $\widetilde{M}$ is a simply connected rational homology sphere, the integral
homology groups in dimensions $2,3,\ldots,n-1$ are finite abelian groups: integral
homology groups of compact manifolds are finitely generated, thus of
form $\bbbz^\ell\oplus \bbbz_{p_1}^{k_1}\oplus\cdots\oplus \bbbz_{p_r}^{k_r}$,
and if we calculate $H^m(\widetilde{M})=H^m(\widetilde{M},\bbbr)$ with the help
of the Universal Coefficient Theorem (\cite[Theorem 3.2]{hatcher}), we
have $0=H^m(\widetilde{M})=\Hom(H_m(\widetilde{M},\bbbz),\bbbr)$.
However, $\Hom(\bbbz^\ell,\bbbr)=\bbbr^\ell$, thus there can be no  free abelian
summand $\bbbz^\ell$ in $H_m(\widetilde{M},\bbbz)$ -- and all that is possibly left is
a finite abelian group.

Now we proceed by induction to show that the hypotheses of Lemma~\ref{hurewicz mod c}
are satisfied.
We have $\pi_1(\widetilde{M})=0$, thus
$h_2:\pi_2(\widetilde{M}) \to H_2(\widetilde{M},\bbbz)$ is a $\mathcal{C}$-iso\-morphism.
Since $H_2(\widetilde{M},\bbbz)$ is, as we have shown, in $\mathcal{C}$,
the group $\pi_2(\widetilde{M})$ is a finite abelian group as well, and we can apply
Theorem~\ref{hurewicz mod c} to show that $h_3$ is a $\mathcal{C}$-isomorphism and
thus that $\pi_3(\widetilde{M})\in\mathcal{C}$.

We proceed likewise by induction until we have that $h_n$ is a $\mathcal{C}$-isomorphism
between $\pi_n(\widetilde{M})$ and $H_n(\widetilde{M},\bbbz)=\bbbz$.
Since $\coker h_n=\bbbz/h_n(\pi_n(\widetilde{M}))$ is a finite group, there exists a
non-zero element $k$ in the image of $h_n$, and $[f]\in\pi_n(\widetilde{M})$ such
that $h_n([f])=k\neq 0$.

The generator of  $H_n(\widetilde{M},\bbbz)=\bbbz$ is the cycle class given by the whole
manifold $\widetilde{M}$, i.e. $h_n([f])=k[\widetilde{M}]$; in other words -- having
fixed a volume form $\omega$ on $\widetilde{M}$, we identify a cycle class $[C]$
with an integer by
$$
[C]\longmapsto \left(\int_C\omega\right)\bigg/\left(\int_{\widetilde{M}}\omega\right),
$$
We also recall that the Hurewicz homomorphism $h_n$ attributes to
an \mbox{$[f]\in \pi_n(\widetilde{M})$}  a cycle class
$h_n([f])=f_*[S^n]$, where $[S^n]$
is the cycle that generates $H_n(S^n,\bbbz)=\bbbz$. Altogether,
$$
\deg f=
\frac{\int_{S^n}f^*\omega }{\int_{\widetilde{M}}\omega} =
\frac{\int_{f_*[S^n]}\omega}{\int_{\widetilde{M}}\omega} =
k\neq 0,
$$
since $f_*[S^n]=k[\widetilde{M}]$.
\end{proof}

To see the scope of applications of Theorem~\ref{T3} and~\ref{T2}
it is important to understand
how large the class of rational homology spheres is, or, more precisely, the class
of manifolds whose universal cover is a rational homology sphere.

One can conclude from the proof presented above
that if $\widetilde{M}$ is a rational
homology sphere then $M$ is a rational homology sphere as well. We will provide two
different proofs of this fact, the second one being related to the argument presented above.

\begin{theorem}
\label{T6}
Let $M$ be an $n$-dimensional compact orientable manifold
without boundary such that its
universal cover $\widetilde{M}$ is a rational homology $n$-sphere. Then
\begin{itemize}
\item[a)] $M$ is a rational homology $n$-sphere as well.
\item[b)] If $n$ is even, then $M$ is simply connected and
$\widetilde{M}=M$.
\end{itemize}
\end{theorem}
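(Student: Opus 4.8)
The plan is to use a transfer/averaging argument together with the Hodge-theoretic trick from the proof of Theorem~\ref{T5}. For part~(a), the key observation is that the covering map $p:\widetilde{M}\to M$ induces an injection on deRham cohomology. Indeed, if the covering has $\gamma$ sheets (which must be finite, since $\widetilde{M}$ is a rational homology sphere and therefore compact), then there is a transfer map $\tau:H^k(\widetilde{M})\to H^k(M)$ obtained by summing a pulled-back form over the sheets of the covering (equivalently, averaging over the deck group $G=\pi_1(M)$, which acts on $\widetilde{M}$); one checks that $\tau\circ p^*=\gamma\cdot\mathrm{id}$ on $H^k(M)$. Since we work with real coefficients and $\gamma\neq 0$, this forces $p^*:H^k(M)\to H^k(\widetilde{M})$ to be injective. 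Now $H^k(\widetilde{M})=0$ for $0<k<n$ immediately gives $H^k(M)=0$ for $0<k<n$; and $H^0(M)=H^n(M)=\bbbr$ because $M$ is compact, connected and orientable. Hence $M$ is a rational homology sphere. I would also sketch the more concrete alternative indicated by the authors: given a nontrivial closed $k$-form $\alpha$ on $M$ with $0<k<n$, one may take it coclosed by Hodge decomposition, so $\omega=\alpha\wedge*\alpha$ has $\int_M\omega=\int_M|\alpha|^2>0$; but $p^*\alpha$ is closed on $\widetilde{M}$ and, since $H^k(\widetilde M)=0$, exact, $p^*\alpha=d\eta$, whence $\int_{\widetilde M}p^*\omega=\int_{\widetilde M}d(\eta\wedge p^*(*\alpha))=0$ by Stokes. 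On the other hand $\int_{\widetilde M}p^*\omega=\gamma\int_M\omega>0$, a contradiction.

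For part~(b), assume $n$ is even. Since $\widetilde{M}$ is a rational homology sphere, its Euler characteristic is $\chi(\widetilde{M})=1+(-1)^n=2$. The covering is $\gamma$-sheeted with $\gamma=|\pi_1(M)|$ finite (by compactness of $\widetilde{M}$ as above), and Euler characteristic is multiplicative under finite coverings, so $\chi(\widetilde{M})=\gamma\,\chi(M)$, i.e. $\gamma\,\chi(M)=2$. By part~(a), $M$ is itself a rational homology sphere of even dimension $n$, so $\chi(M)=1+(-1)^n=2$ as well. Therefore $\gamma\cdot 2=2$, forcing $\gamma=1$, i.e. $\pi_1(M)$ is trivial and $p:\widetilde{M}\to M$ is a homeomorphism.

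The main obstacle is making the transfer construction on deRham cohomology precise and verifying the identity $\tau\circ p^*=\gamma\cdot\mathrm{id}$ cleanly in the smooth category — one must either invoke the deck-group averaging of differential forms (smoothing out the local picture of the covering) or appeal to the standard transfer in singular cohomology with $\bbbr$ coefficients and then match it with deRham cohomology via the deRham isomorphism. For the concrete Hodge-theoretic variant the only subtlety is checking that $p^*(*_M\alpha)$ is closed, which follows since $*_M\alpha$ is closed on $M$ (as $\alpha$ is coclosed) and $p^*$ commutes with $d$; note that we do not need $p$ to be a Riemannian isometry for this, only for $d$. Part~(b) is then a short Euler-characteristic count once part~(a) and the finiteness of the covering are in hand.
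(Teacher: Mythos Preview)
Your proposal is correct and follows essentially the same approach as the paper: the transfer map with $\tau\circ p^*=\gamma\cdot\mathrm{id}$ (you phrase the conclusion as injectivity of $p^*$, the paper as surjectivity of $\tau_*$, but these are equivalent here), the Hodge-theoretic alternative via $\alpha\wedge *\alpha$, and the Euler-characteristic count for part~(b) all match the paper's proof. The paper also spells out the local construction of $\tau_*$ in terms of the sheets $p_i^{-1}$, which addresses the precision concern you flag at the end.
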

\begin{proof}
We start with two trivial observations: the fact that $\widetilde{M}$ is
a rational homology sphere implies immediately that $\widetilde{M}$ is compact
($H^n(\widetilde{M})=\bbbr$) and connected ($H^0(\widetilde{M})=\bbbr$). This, in
turn, shows that $M$ is connected and that the number of sheets in
the covering is finite.

For any finite covering $p:\widetilde{N}\to N$ with
number of sheets $\gamma$ there is not only the induced {\em lifting}
homomorphism $p^*:H^k(N)\to H^k(\widetilde{N})$, but also the so-called
{\em transfer} or {\em pushforward} homomorphism
$$
\tau_*:H^k(\widetilde{N})\to H^k(N)
$$
defined on differential forms as follows. For any $x\in N$ there
is a neighborhood $U$ such that $p^{-1}(U)=U_1\cup\ldots\cup
U_\gamma$ is a disjoint sum of open sets such that
$$
p_i=p|_{U_i}:U_i\to U
$$
is a diffeomorphism. Then for a $k$-form $\omega$ on $\widetilde{N}$ we define
$$
\tau_*\omega|_U =
\sum_{i=1}^\gamma (p_i^{-1})^*(\omega|_{U_i})\, .
$$
If $U$ and $W$ are two different neighborhoods in $N$, then
$\tau_*\omega|_U$ coincides with $\tau_*\omega|_W$ on $U\cap W$ and hence $\tau_*\omega$
is globally defined on $N$.
Since
$$
\tau_*(d\omega)|_U=
\sum_{i=1}^\gamma(p_i^{-1})^*(d\omega|_{U_i})=
\sum_{i=1}^\gamma d(p_i^{-1})^*(\omega|_{U_i}) =
d(\tau_*\omega|_U)
$$
we see that $\tau_*\circ d = d\circ\tau_*$ and hence $\tau_*$ is a homomorphism
$$
\tau_*:H^k(\widetilde{N})\to H^k(N)\, .
$$
Clearly $\tau_*\circ p^*=\gamma\,{\rm id}$ on $H^k(N)$ and hence
$\tau_*$ is a surjection.

In our case $\widetilde{N}=\widetilde{M}$ is a rational homology sphere and
all cohomology groups $H^k(\widetilde{M})$ vanish for $0<k<n$.
Since $\tau_*$ is a surjection, $H^k(M)=0$. The remaining $H^0(M)$ and $H^n(M)$
are equal $\bbbr$ by compactness, orientability and connectedness of $M$.

Another proof is related to the argument used in the proof of Theorem~\ref{T5}.
By contradiction, suppose that $H^k(M)\neq 0$ for some $0<k<n$.
Hence there is a non-trivial closed and coclosed $k$-form $\alpha$ on $M$.
Since $\alpha$ is coclosed, $*\alpha$ is closed.
Then
$$
\int_M\alpha \wedge \ast\alpha
=
\int_M |\alpha|^2 > 0,
$$
thus
$$
0\neq \gamma\int_M \alpha\wedge *\alpha =
\int_{\widetilde{M}}p^*(\alpha\wedge
\ast\alpha)=\int_{\widetilde{M}}p^*\alpha\wedge p^*(\ast\alpha)=0,
$$
because $p^*\alpha$ is exact on $\widetilde{M}$
(we have $H^k(\widetilde{M})=0$), $p^*(*\alpha)$ is closed, and hence $p^*\alpha\wedge
p^*(\ast\alpha)$ is exact.

To prove b), we recall that if $p:\widetilde{N} \to N$ is a covering of
order $\gamma$, then the Euler characteristic $\chi_{\widetilde{N}}$ of
$\widetilde{N}$ is equal to $\gamma \chi_N$. This can be easily seen
through the triangulation definition of $\chi_N$: to any sufficiently small simplex in
$N$ there correspond $\gamma$ distinct simplices in $\widetilde{N}$ -- in
such a way we may lift a sufficiently fine triangulation of $N$ to $\widetilde{N}$;
clearly, the alternating sum of number of simplices in every
dimension calculated for $\widetilde{N}$ is $\gamma$ times that for $N$.

In our case, the Euler characteristic of $M$ and $\widetilde{M}$ is
either 0 (for $n$ odd) or 2 (for $n$ even) -- this follows
immediately from the fact that
$$
\chi_{N}=\sum_{i=0}^{\dim N}
(-1)^i\dim H^i(N).
$$
Therefore, for $n=\dim M$ even, the order of
the universal covering $\widetilde{M}\to M$ must be 1, and b) follows.
\end{proof}

\begin{remark}
We actually proved a stronger version of part a).
If a cover (not necessarily universal) of a
manifold $M$ is a rational homology sphere, then $M$ is a rational homology
sphere, too -- in the proof we never used the fact that the covering space is
simply connected. However, we stated the result for the universal cover only, because
this is exactly what we need in our applications.
\end{remark}

Below we provide some examples of rational homology spheres.

Any homology sphere is a rational homology sphere.
That includes Poincar\'e homology sphere (also known as Poincar\'e
dodecahedral space) and Brieskorn manifolds
$$
\Sigma(p,q,r) =
\left\{ (z_1,z_2,z_3)\in\bbbc^3:\,
z_1^p+z_2^q+z_3^r=0,\
|(z_1,z_2,z_3)|=1\right\}\, ,
$$
where $1<p<q<r$ are pairwise relatively prime integers.
$\Sigma(2,3,5)$ is the Poincar\'e homology sphere.

Recall that any isometry of $\bbbr^n$ can be described
as a composition of an orthogonal linear map
and a translation; the group of all such isometries, denoted by
$E(n)$, is a semi-direct product $O(n) \ltimes \bbbr^n$. Any discrete
subgroup $G$ of $E(n)$ such that $E(n)/G$ is compact is called a
{\it crystallographic group}.

\begin{proposition}[see \cite{Szczepanski83}]
Let us set $\{e_i\}$ to be the standard basis in
$\bbbr^{2n+1}$ and
$$B_i=\mathrm{Diag}(-1,\cdots,-1,\underbrace{1}_i,-1,\cdots,-1).$$ Let
$\Gamma$ be a crystallographic group generated by isometries of
$\bbbr^{2n+1}$
\begin{equation*}
T_i\,:\,v\longmapsto B_i\cdot v+e_i,\qquad i=1,2,\ldots,2n.
\end{equation*}
Then $M=\bbbr^{2n+1}/\Gamma$ is a rational homology $(2n+1)$-sphere.
\end{proposition}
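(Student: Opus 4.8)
The plan is to recognise $M=\bbbr^{2n+1}/\Gamma$ as a closed flat manifold and to compute its real cohomology as the space of holonomy-invariant constant-coefficient forms; everything then reduces to a short combinatorial computation over $\mathbb{F}_2$. First I would record that, by the Bieberbach theorems and the analysis in \cite{Szczepanski83}, $\Gamma$ is torsion-free and therefore acts freely, properly discontinuously and cocompactly on $\bbbr^{2n+1}$; hence $M$ is a closed connected flat Riemannian manifold, and since $\bbbr^{2n+1}$ is contractible it is aspherical. Let $L=\Gamma\cap\bbbr^{2n+1}$ be its full-rank translation lattice and $G=\Gamma/L$ the finite linear holonomy group. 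Under the homomorphism $E(2n+1)\to O(2n+1)$ sending the isometry $v\mapsto Av+b$ to its linear part $A$, whose kernel is the group of translations, $G$ is carried isomorphically onto $\langle B_1,\dots,B_{2n}\rangle\subset O(2n+1)$, because $\Gamma$ is generated by the $T_i$ and $T_i\mapsto B_i$. The cohomological input I would invoke is the classical description of the real cohomology of a closed flat manifold,
$$
H^k(M;\bbbr)\ \cong\ \bigl(\Lambda^k(\bbbr^{2n+1})^{*}\bigr)^{G},
$$
the $G$-invariant constant-coefficient alternating $k$-forms. One obtains this either from Hodge theory (on a flat compact manifold the Bochner--Weitzenb\"ock formula forces every harmonic form to be parallel, hence constant in the Euclidean coordinates and invariant under the deck group, i.e.\ under $G$), or from the Lyndon--Hochschild--Serre spectral sequence of $1\to L\to\Gamma\to G\to1$, which over $\bbbr$ collapses since $G$ is finite and yields $H^k(\Gamma;\bbbr)=H^k(L;\bbbr)^{G}=\bigl(\Lambda^k(\bbbr^{2n+1})^{*}\bigr)^{G}$.

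\emph{Identifying $G$.} For $S\subseteq\{1,\dots,2n+1\}$ let $D_S$ be the diagonal matrix with $-1$ in the places indexed by $S$ and $+1$ elsewhere; then $D_SD_T=D_{S\bigtriangleup T}$, so $S\mapsto D_S$ identifies the diagonal $\pm1$ matrices with $(\mathbb{F}_2)^{2n+1}$ under symmetric difference, and $B_i=D_{S_i}$ with $S_i=\{1,\dots,2n+1\}\setminus\{i\}$, $|S_i|=2n$. Symmetric difference preserves the parity of cardinality, so $\langle B_1,\dots,B_{2n}\rangle$ lies in the hyperplane $\{D_S:|S|\text{ even}\}$. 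Conversely $S_i\bigtriangleup S_j=\{i,j\}$ for $i\ne j$; the sets $\{i,j\}$ with $i,j\le 2n$ span under $\bigtriangleup$ all even subsets of $\{1,\dots,2n\}$ (a $(2n-1)$-dimensional space), while $S_1$ involves the coordinate $2n+1$ and so contributes one further dimension, so the $2n$ vectors $S_1,\dots,S_{2n}$ are a basis of that hyperplane. Hence
$$
G=\{D_S:\ |S|\text{ even}\}=\{\text{diagonal }\pm1\text{ matrices of determinant }1\}\subset SO(2n+1),
$$
and in particular $M$ is orientable.

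\emph{Computing the invariants.} In the Euclidean coordinates the monomials $dx_I=dx_{i_1}\wedge\cdots\wedge dx_{i_k}$, $I=\{i_1<\cdots<i_k\}$, form an eigenbasis of $\Lambda^k(\bbbr^{2n+1})^{*}$, with $D_S$ acting by the scalar $(-1)^{|S\cap I|}$. Thus $dx_I$ is $G$-invariant precisely when $|S\cap I|$ is even for every even set $S$. For $I=\emptyset$ and for $I=\{1,\dots,2n+1\}$ (where $|S\cap I|=|S|$) this holds, so $(\Lambda^0)^{G}=(\Lambda^{2n+1})^{G}=\bbbr$; but if $1\le|I|\le 2n$, pick $a\in I$ and $b\notin I$ — possible since $|I|<2n+1$ — and take $S=\{a,b\}$: then $|S|=2$ is even while $|S\cap I|=1$, so $dx_I$ is not invariant. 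Hence $\bigl(\Lambda^k(\bbbr^{2n+1})^{*}\bigr)^{G}=0$ for $0<k<2n+1$, and combined with the displayed isomorphism this gives
$$
H^k(M;\bbbr)=\begin{cases}\bbbr & k=0\ \text{or}\ k=2n+1,\\ 0 & \text{otherwise},\end{cases}
$$
so $M$ is a rational homology $(2n+1)$-sphere.

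I expect the one genuinely load-bearing step to be the identification of the holonomy group $G$ — the small $\mathbb{F}_2$-linear-algebra fact that $B_1,\dots,B_{2n}$ generate exactly the even-weight diagonal sign group — together with the standard but essential input that the real cohomology of a closed flat manifold equals the holonomy-invariant part of the constant-coefficient de Rham complex. Granting those, the vanishing of $H^k$ in the intermediate degrees is the one-line argument above; the remaining point, that $\Gamma$ is torsion-free so that $M$ is genuinely a manifold, is taken from \cite{Szczepanski83}.
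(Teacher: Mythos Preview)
The paper does not supply its own proof of this proposition; it merely states the result with the attribution ``see \cite{Szczepanski83}'' and then uses it as an example. So there is nothing to compare against on the paper's side.

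Your argument is correct and essentially self-contained. The identification of the holonomy group $G=\langle B_1,\dots,B_{2n}\rangle$ with the even-weight diagonal sign group is right: each $B_i=D_{S_i}$ has $|S_i|=2n$ even, the products $B_iB_j=D_{\{i,j\}}$ give all two-element subsets of $\{1,\dots,2n\}$, and these together with $S_1$ (which hits the coordinate $2n{+}1$) span the full even-weight hyperplane in $\mathbb{F}_2^{2n+1}$. The invariant computation is also clean: the $dx_I$ are simultaneous eigenvectors for the abelian group $G$, so $(\Lambda^k)^G$ is spanned by those $dx_I$ with $|S\cap I|$ even for every even $S$, and your choice $S=\{a,b\}$ with $a\in I$, $b\notin I$ kills every $dx_I$ with $0<|I|<2n+1$. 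The cohomological input $H^k(M;\bbbr)\cong(\Lambda^k(\bbbr^{2n+1})^*)^G$ is the standard description of the real cohomology of a closed flat manifold, and both of your justifications (parallel harmonic forms via Bochner, or collapse of the LHS spectral sequence over $\bbbr$) are valid. The only step you do not verify directly---that $\Gamma$ is torsion-free so that $M$ is genuinely a manifold---is appropriately referred to \cite{Szczepanski83}, which is exactly what the paper itself does. Nothing is missing.
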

For $n=1$ this is a well known example
of $\bbbr^3/\mathcal{G}_6$, see \cite[3.5.10]{Wolf_CC}. However,
$\pi_1(M)=\Gamma$ is infinite and the universal cover of $M$ is
$\bbbr^{2n+1}$, so $M$ is an example of a rational homology sphere
whose universal cover is not a rational homology sphere.

The following well known fact illustrates the difficulty of finding non-trivial
examples in dimension 4:
\begin{proposition}
\label{escztery}
Let $N$ be a compact orientable $4$-manifold without boundary.
Then the universal cover of $N$ is a rational homology sphere if
and only if $N$ is homeomorphic to $S^4$.
\end{proposition}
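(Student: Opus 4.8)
The plan is to prove both implications. The easy direction: if $N$ is homeomorphic to $S^4$, then its universal cover is $N$ itself (a sphere is simply connected), which is trivially a rational homology sphere. For the converse, suppose $\widetilde{N}$ is a rational homology $4$-sphere. By Theorem~\ref{T6}b), since $n=4$ is even, $N$ is simply connected and $N=\widetilde{N}$. So it suffices to show: a simply connected compact orientable $4$-manifold $N$ with the rational cohomology of $S^4$ is homeomorphic to $S^4$. First I would record that simple connectivity plus $H^2(N;\bbbr)=0$ forces $H_2(N;\bbbz)=0$: by the universal coefficient theorem $H^2(N;\bbbr)=\Hom(H_2(N;\bbbz),\bbbr)$, so $H_2(N;\bbbz)$ is a finite abelian group, but for a simply connected space $H_2(N;\bbbz)\cong\pi_2(N)$ by Hurewicz, and by Poincar\'e duality $H_2(N;\bbbz)\cong H^2(N;\bbbz)\cong\Hom(H_2(N;\bbbz),\bbbz)\oplus\mathrm{Ext}(H_1(N;\bbbz),\bbbz)=0$ since $H_1=0$ and $H_2$ is torsion — hence $H_2(N;\bbbz)=0$ and $\pi_2(N)=0$.

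Next I would invoke Freedman's classification of simply connected closed topological $4$-manifolds, which are determined up to homeomorphism by their intersection form together with the Kirby–Siebenmann invariant. Since $H_2(N;\bbbz)=0$, the intersection form is the trivial (zero-rank) form, which is the same as that of $S^4$; the Kirby–Siebenmann invariant is the only remaining obstruction, and for the trivial form it must vanish (equivalently, one can cite that the only simply connected closed $4$-manifold with trivial second homology is $S^4$). Therefore $N$ is homeomorphic to $S^4$. Alternatively, and perhaps cleaner to cite, one observes that a simply connected homology $4$-sphere is a homotopy $4$-sphere (all homology groups match $S^4$ and both are simply connected, so a generator of $H_4$ is represented by a degree-one map $S^4\to N$ which is then a homology isomorphism between simply connected spaces, hence a homotopy equivalence by Whitehead), and then the topological $4$-dimensional Poincar\'e conjecture, proved by Freedman, gives that $N$ is homeomorphic to $S^4$.

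The main obstacle here is genuinely external: there is no elementary argument, and the statement rests on Freedman's deep work on topological $4$-manifolds (the smooth Poincar\'e conjecture in dimension $4$ remains open, so one cannot hope for "diffeomorphic"). Thus the real content of the proof is the reduction — via Theorem~\ref{T6}b) to the simply connected case, and then via Poincar\'e duality and Hurewicz to showing $N$ is a homotopy sphere — after which Freedman's theorem is quoted as a black box. I would also remark that the evenness of the dimension is used twice: once through Theorem~\ref{T6}b) to kill the fundamental group, and implicitly in that the analogous statement fails in odd dimensions, as the examples of Brieskorn manifolds and the crystallographic rational homology spheres $\bbbr^{2n+1}/\Gamma$ discussed above already show.
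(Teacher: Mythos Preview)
Your proposal is correct and follows essentially the same route as the paper: use Theorem~\ref{T6}b) to reduce to the simply connected case, combine the Universal Coefficients Theorem with Poincar\'e duality (and $H_1=0$) to see that $N$ is an integral homology sphere, conclude via Whitehead that $N$ is a homotopy $4$-sphere, and then invoke Freedman's resolution of the topological $4$-dimensional Poincar\'e conjecture; the paper likewise remarks that the diffeomorphism question remains open. Your additional detour through the intersection form and Kirby--Siebenmann invariant is a valid alternative packaging of the same Freedman input, but the paper opts directly for the homotopy-sphere formulation.
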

\begin{proof}
If $N$ is homeomorphic to $S^4$, then $\widetilde{N}=N$ is a rational
homology sphere. Suppose now
that the universal cover $\widetilde{N}$ is a rational homology sphere.
Theorem~\ref{T6} tells
us that $N$ must be simply connected -- therefore $H_1(N,\bbbz)=0$.
Standard application of the Universal Coefficients Theorem (see e.g.
\cite[Corollary 15.14.1]{BottTu}) gives us that $H^2(N,\bbbz)$ has no
torsion component (and thus is 0, since $N$ is a rational homology
sphere) and $H^1(N,\bbbz)=0$. Poincar\'e duality, in turn, shows that
$H^3(N,\bbbz)\approx H_1(N,\bbbz)=0$. The remaining $H^0(N,\bbbz)$ and
$H^4(N,\bbbz)$ are $\bbbz$ by connectedness and orientability of $N$.
Altogether, $N$ is an integral homology sphere, and thus, by
the homology Whitehead theorem (\cite[Corollary 4.33]{hatcher}), a homotopy sphere.
Ultimately, by M. Freedman's celebrated result on Generalized
Poincar\'e Conjecture (\cite{Freedman}) a homotopy 4-sphere is
homeomorphic to a $S^4$. Whether $M$ is diffeomorphic to $S^4$ remains,
however, an open problem.
\end{proof}

One can construct lots of examples of non-simply connected rational
homology $4$-spheres, although not every finite group might arise as
a fundamental group of such a manifold
(\cite[Corollary 4.4]{HambletonKreck}, see also \cite{Kervaire69}).

As for higher even dimensions, A. Borel proved (\cite{Borel49}) that the
only rational homology $2n$-sphere that is a homogeneous $G$-space
for some compact, connected Lie group $G$ is a standard $2n$-sphere.

Another example is provided by lens spaces
\cite[Example~2.43]{hatcher}. Given an integer $m>1$ and integers
$\ell_1,\ldots,\ell_n$ relatively prime to $m$, the
{\em lens space}, $L_m(\ell_1,\ldots\ell_n)$ is defined as the orbit
space $S^{2n-1}/\bbbz_m$ of $S^{2n-1}\subset\bbbc^n$
with the action of $\bbbz_m$ generated by rotations
$$
\rho(z_1,\ldots,z_n) =
\left(e^{2\pi i\ell_1/m}z_1,\ldots,e^{2\pi i\ell_n/m} z_n\right)\, .
$$
Since the action of $\bbbz_m$ on $S^{2n-1}$ is free, the projection
$$S^{2n-1}\to L_m(\ell_1,\ldots,\ell_n)$$ is a covering and hence
lens spaces are manifolds. One can easily prove that the integral
homology groups are
$$
H_k\left(L_m(\ell_1,\ldots,\ell_n)\right) =
\left\{
\begin{array}{ccc}
\bbbz    & \mbox{if $k=0$ or $k=2n-1$,}\\
\bbbz_m  & \mbox{if $k$ is odd, $0<k<2n-1$,} \\
0        & \mbox{otherwise.}
\end{array}
\right.
$$
Hence $L_m(\ell_1,\ldots,\ell_n)$ is a rational homology sphere with the
universal covering space $S^{2n-1}$.

In dimensions $n>4$ there exist numerous simply connected (smooth)
rational homology spheres -- a particularly interesting set of
examples are the exotic spheres, i.e. manifolds homeomorphic, but
not diffeomorphic to a sphere (see e.g. a nice survey article of
Joachim and Wraith \cite{JoachimWraith}).

\section{Proofs of Theorems~\ref{T1} and~\ref{T2}.}
\label{HD}

In the proof of these two theorems we shall need the following definition and a
theorem of B. White:

\begin{definition}
Two continuous mappings $g_1, g_2:M\to N$ are {\em $\ell$-homotopic}
if there exists a
triangulation of $M$ such that the two mappings restricted to the $\ell$-dimensional skeleton
$M^\ell$ are homotopic in $N$.
\end{definition}

It easily follows from the cellular approximation theorem that this definition
does not depend on the choice of a triangulation, see \cite[Lemma~2.1]{hangl2}.
\begin{lemma}[Theorem~2, \cite{white1}]
\label{thm white}
Let $M$ and $N$ be compact Riemannian manifolds, $f\in W^{1,p}(M,N)$. There exists $\eps>0$
such that any two Lipschitz mappings $g_1$ and $g_2$ satisfying $\|f-g_i\|_{W^{1,p}}<\eps$
are $[p]$-homotopic.
\end{lemma}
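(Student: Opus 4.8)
The plan is to use a slicing argument to reduce the comparison of $g_1$ and $g_2$ to the comparison of their restrictions to a generic $[p]$-dimensional skeleton, and then to finish by Sobolev embedding in the generic range $p>[p]$, falling back on White's finer analysis in the borderline integer case $p=[p]$.

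First I would fix, once and for all, a sufficiently fine smooth triangulation of $M$, with each top cell contained in a coordinate chart, and run the standard Fubini/ACL slicing. Putting the triangulation in generic position by a translation parameter $a\in Q$ (acting in local charts) and writing $M^{[p]}_a$ for the corresponding $[p]$-skeleton, one obtains, for every $u\in W^{1,p}(M,\bbbr^k)$,
$$
\int_Q \|u\|_{W^{1,p}(M^{[p]}_a)}^{p}\,da \ \le\ C\,\|u\|_{W^{1,p}(M)}^{p},
$$
where $C$ depends only on the chosen triangulation, while for a.e. $a$ the restriction $f|_{M^{[p]}_a}$ is an $N$-valued $W^{1,p}$ map. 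Applying this to $f$, $f-g_1$ and $f-g_2$ and using Chebyshev's inequality, I can select one parameter $a_0$ for which $f|_{M^{[p]}_{a_0}},\,g_1|_{M^{[p]}_{a_0}},\,g_2|_{M^{[p]}_{a_0}}$ all lie in $W^{1,p}(M^{[p]}_{a_0},N)$ and $\|(f-g_i)|_{M^{[p]}_{a_0}}\|_{W^{1,p}(M^{[p]}_{a_0})}\le C\,\|f-g_i\|_{W^{1,p}(M)}<C\eps$ for $i=1,2$.

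Next, suppose $p>[p]$, i.e. $p$ is not an integer. Then $\dim M^{[p]}_{a_0}=[p]<p$, so Morrey's embedding $W^{1,p}(M^{[p]}_{a_0})\hookrightarrow C^{0}(M^{[p]}_{a_0})$ is bounded by a constant $C'$ depending only on the triangulation; combined with the previous step and the triangle inequality this gives $\|g_1-g_2\|_{C^0(M^{[p]}_{a_0})}<2C'C\eps$. Fix a tubular neighborhood $N_\delta$ of $N\subset\bbbr^k$ with nearest-point retraction $\pi\colon N_\delta\to N$. If $\eps$ is so small that $2C'C\eps<\delta$, then for each $x$ the segment from $g_1(x)\in N$ to $g_2(x)\in N$ stays in $N_\delta$, so
$$
H(x,t)=\pi\bigl((1-t)\,g_1(x)+t\,g_2(x)\bigr),\qquad x\in M^{[p]}_{a_0},\ t\in[0,1],
$$
is a homotopy in $N$ between $g_1$ and $g_2$ restricted to $M^{[p]}_{a_0}$; since $[p]$-homotopy does not depend on the triangulation, $g_1$ and $g_2$ are $[p]$-homotopic. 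In this range $\eps$ may even be chosen independently of $f$, which already covers the situation needed in this paper when $n-1<p<n$.

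The remaining case $p=[p]$ --- an integer, relevant here precisely when $p=n-1$ --- is the genuine obstacle, and is where $\eps$ must be allowed to depend on $f$, because $W^{1,p}$ of a $p$-dimensional complex no longer embeds in $C^{0}$. I would descend one more dimension: on $M^{[p]-1}_{a_0}$ the previous step still applies ($p>[p]-1$), producing a homotopy between $g_1$ and $g_2$ on the $([p]-1)$-skeleton, and it then remains to extend this homotopy across each $[p]$-cell $\sigma\cong D^{[p]}$, where the obstruction is an element of $\pi_{[p]}(N)$. The key point is that this obstruction vanishes, simultaneously on all cells, once $\|f-g_i\|_{W^{1,p}}$ is small enough: using the coarea formula one finds concentric $([p]-1)$-spheres inside $\sigma$ on which $f,\,g_1,\,g_2$ are continuous, mutually close, and of small oscillation (controlled by the VMO modulus of continuity of $f|_\sigma$), and then cones the homotopy inward. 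Carrying this out quantitatively and uniformly over all cells is exactly the content of White's argument in \cite{white1}, to which we refer for the details.
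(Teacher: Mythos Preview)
The paper does not prove this lemma at all: it is stated as a quotation of Theorem~2 in \cite{white1} and used as a black box, so there is no ``paper's own proof'' to compare against.

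Your proposal is, in outline, a faithful sketch of White's original argument. The Fubini/slicing reduction to a generic $[p]$-skeleton, followed by Morrey's embedding when $p>[p]$, is exactly how the easy (non-integer) case goes, and your observation that in this range $\eps$ can be taken independent of $f$ is correct and well known. You also correctly isolate the integer case $p=[p]$ as the genuine difficulty --- the only case where the dependence of $\eps$ on $f$ actually enters --- and your description of the mechanism (descend to the $([p]-1)$-skeleton, identify the extension obstruction on each $[p]$-cell as a class in $\pi_{[p]}(N)$, and kill it using small-oscillation/VMO control of $f$ on generic concentric spheres) matches White's strategy. Since you ultimately refer back to \cite{white1} for the quantitative implementation of that last step, your write-up is best read as an expanded citation rather than an independent proof; for the purposes of this paper, where the lemma is invoked only as a tool, that is entirely adequate.
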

Here $[p]$ is the largest integer less than or equal to $p$.
We shall also need the following construction:
\begin{example}
\label{ex 3.3}
We shall construct a particular sequence of mappings \mbox{$g_k:S^n\to S^n$}.
Consider the spherical
coordinates on $S^n$:
\begin{equation*}
\begin{split}
(z,\theta)\mapsto &(z\sin\theta,\cos \theta),\\
&z\in S^{n-1}
\text{ (equatorial coordinate)},\\
&\theta\in[0,\pi] \text{ (latitude angle)}.
\end{split}
\end{equation*}
These coordinates have, clearly, singularities at the north and south pole.
Consider the polar cap $C_k=\{(z,\theta)~~:~~0\leq \theta\leq \frac{1}{k}\}$ and a mapping
$$
g_k\,:\,S^n\to S^n,\quad g_k(z,\theta)=
\begin{cases}
(z,k\pi \theta)&0\leq\theta< \frac{1}{k}\\
(z,\pi) \text{ (i.e. south pole)}& \frac{1}{k}\leq \theta\leq \pi.
\end{cases}.
$$
The mapping stretches the polar cap $C_k$ onto the whole sphere, and maps all the rest of
the sphere into the south pole.
It is clearly homotopic to the identity map -- therefore it is of degree 1.

Measure of the polar cap $C_k$ is comparable to $k^{-n}$,
$|C_k|\approx k^{-n}$.
It is also easy to see that the derivative of $g_k$ is bounded by $Ck$.

Let $1\leq p<n$. Since the mappings $g_k$ are bounded and $g_k$ converges a.e. to the
constant mapping into the south pole as $k\to\infty$, we conclude that $g_k$ converges to a
constant map in $L^p$.
On the other hand, the $L^p$ norm of the derivative $Dg_k$ is bounded by
$$
\int_{S^n} |Dg_k|^p \leq Ck^p |C_k| \leq
C'k^{p-n}\to 0
\quad
\mbox{as $k\to\infty$,}
$$
and hence $g_k$ converges to a constant map in $W^{1,p}$.
\end{example}

\begin{proof}[Proof of Theorem~\ref{T1}]
Assume that $\pi_n(N)=0$ and
$f\in H^{1,p}(M,N)$, with $n-1\leq p<n$. By Lemma~\ref{thm white} any two smooth mappings
$g_1$ and $g_2$ sufficiently close to $f$ are $(n-1)$-homotopic, that is there exists a
triangulation $\mathfrak{T}$ of $M$ such that $g_1$ and $g_2$, restricted to the
$(n-1)$-dimensional skeleton $M^{n-1}$ of $\mathfrak{T}$, are homotopic.

Let $H:M^{n-1}\times [0,1] \to N$ be a homotopy between $g_1$ and $g_2$ and let $\Delta$
be an arbitrary $n$-simplex of $\mathfrak{T}$. We have defined a mapping $\mathcal{H}$
from the boundary of $\Delta\times [0,1]$ to $N$: it is given by $g_1$ on
$\Delta\times \{0\}$, by $g_2$ on $\Delta \times \{1\}$ and by $H$ on
$\partial \Delta \times [0,1]$. However, $\partial(\Delta\times [0,1])$ is homeomorphic to
$S^{n}$. Since $\pi_{n}(N)=0$, any such mapping is null-homotopic and extends to the whole
$\Delta\times [0,1]$. In such a way, simplex by simplex, we can extend the homotopy
between $g_1$ and $g_2$ onto the whole $M$.

We showed that any two smooth mappings sufficiently close to
$f$ in the norm of $W^{1,p}$ are homotopic,
and we may define the homotopy class of $f$ as the homotopy class of a sufficiently good
approximation of $f$ by a smooth function.
Hence homotopy classes can be well defined in $H^{1,p}(M,N)$.

If in addition $\pi_{n-1}(N)=0$, then according to
\cite[Corollary~1.7]{hangl2}, smooth mappings
are dense in $W^{1,p}(M,N)$, so
$W^{1,p}(M,N)=H^{1,p}(M,N)$ and thus homotopy classes
are well defined in $W^{1,p}(M,N)$.

In order to complete the proof of the theorem we need yet to show that
if $\pi_n(N)\neq 0$, we cannot define homotopy classes in
$W^{1,p}(S^n,M)$, \mbox{$n-1\leq p<n$}, in a continuous way. As advertised in the
Introduction, we shall construct a sequence of smooth mappings that
converge in $W^{1,p}(S^n,N)$ to a constant mapping, but are homotopically non-trivial.

If $\pi_n(N)\neq 0$, we have a smooth
mapping $G:S^n\to N$ that is not homotopic to a constant one.
Hence the mappings $G_k=G\circ g_k$ are not homotopic to
a constant mapping, where $g_k$ was constructed in Example~\ref{ex 3.3}.
On the other hand, since the mappings $g_k$ converge to a constant map
in $W^{1,p}$, the sequence $G_k$ also converges to a constant map, because
composition with $G$ is continuous in the Sobolev norm.
\end{proof}

The condition $\pi_n(N)=0$ is not necessary for the mappings
$g_1$ and $g_2$ to be homotopic, as can be seen from the following well known
\begin{proposition}
\label{cp2tocp1}
Any two continuous mappings $\mathbb{CP}^2\to \mathbb{CP}^1$ are homotopic,
while $\pi_4(\mathbb{CP}^1)=\bbbz_2$.
\end{proposition}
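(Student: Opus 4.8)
The plan is to compute the pointed homotopy set $[\mathbb{CP}^2,S^2]$ and show that it consists of a single element. Since $S^2=\mathbb{CP}^1$ is simply connected, pointed and free homotopy classes of maps into it coincide, so this proves at once that any two maps $\mathbb{CP}^2\to\mathbb{CP}^1$ are homotopic — in fact all of them are null-homotopic.

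I would use the cell structure $\mathbb{CP}^2=S^2\cup_\eta e^4$, where $\eta\colon S^3\to S^2$ is the Hopf map, and the associated Puppe cofiber sequence
$$
S^3\xrightarrow{\ \eta\ }S^2\xrightarrow{\ i\ }\mathbb{CP}^2\xrightarrow{\ q\ }S^4\xrightarrow{\ \Sigma\eta\ }S^3\longrightarrow\cdots,
$$
in which $i$ is the inclusion of the $2$-skeleton and $q$ collapses it. Applying $[-,S^2]$ gives the exact sequence of pointed sets
$$
\pi_3(S^2)\xrightarrow{\,(\Sigma\eta)^*\,}\pi_4(S^2)\xrightarrow{\ q^*\ }[\mathbb{CP}^2,S^2]\xrightarrow{\ i^*\ }\pi_2(S^2).
$$
The first step is to check that $i^*$ is the constant map, i.e. that every $f\colon\mathbb{CP}^2\to S^2$ restricts to a null-homotopic map on the $2$-skeleton; by exactness this forces $q^*$ to be surjective. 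This is immediate from the ring structure of $H^*(\mathbb{CP}^2)$: if $u$ generates $H^2(S^2;\bbbz)$ and $x$ generates $H^2(\mathbb{CP}^2;\bbbz)$, write $f^*u=d\,x$, so that $d$ is the degree of the restriction $f|_{S^2}\colon S^2\to S^2$; then $d^2x^2=(f^*u)^2=f^*(u^2)=0$ because $u^2\in H^4(S^2)=0$, and since $x^2$ generates the infinite cyclic group $H^4(\mathbb{CP}^2;\bbbz)$ we conclude $d=0$. (Alternatively one may argue inside the cofiber sequence, noting that $\eta^*\colon\pi_2(S^2)\to\pi_3(S^2)$, $d\mapsto d^2\eta$, carries only $0$ to the trivial class.)

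The second step — the only one that is not purely formal — is to show that $q^*$ is itself constant. By exactness at $\pi_4(S^2)$ it suffices to prove that $(\Sigma\eta)^*\colon\pi_3(S^2)\to\pi_4(S^2)$ is onto. Since $\Sigma\eta$ is a suspension, $(\Sigma\eta)^*$ is a homomorphism $\bbbz\to\bbbz_2$, and it sends the generator $\eta\in\pi_3(S^2)$ to $\eta\circ\Sigma\eta$, which is exactly the generator of $\pi_4(S^2)\cong\bbbz_2$; hence $(\Sigma\eta)^*$ is surjective, $\ker q^*=\pi_4(S^2)$, and $q^*$ is constant. Being then both surjective and constant, $q^*$ shows that $[\mathbb{CP}^2,S^2]$ is a single point, which is the claim; the fact that $\pi_4(\mathbb{CP}^1)=\pi_4(S^2)=\bbbz_2\neq0$ causes no difficulty, since this group is entirely absorbed into $\ker q^*$.

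I expect the crux to be the identification in the second step: one must know that $\pi_4(S^2)\cong\bbbz_2$ and, crucially, that its nonzero element is $\eta\circ\Sigma\eta$, i.e. the Hopf map precomposed with the suspension of the attaching map of the top cell of $\mathbb{CP}^2$. This is precisely what makes the a priori secondary obstruction in $H^4(\mathbb{CP}^2;\pi_4 S^2)\cong\bbbz_2$ vanish, and it is where the specific geometry of $\mathbb{CP}^2$ (rather than, say, $S^2\vee S^4$) enters. An essentially equivalent, more hands-on way to finish — which I might record as a remark — is to homotope a given $f$ to be constant on the $2$-skeleton, factor it as $\mathbb{CP}^2\xrightarrow{q}S^4\xrightarrow{g}S^2$ with $g\in\pi_4(S^2)$, write $g=\eta\circ\Sigma\eta$, and observe that $\Sigma\eta\circ q\colon\mathbb{CP}^2\to S^3$ is null-homotopic (the same cofiber-sequence argument with target $S^3$, using $\pi_2(S^3)=0$ and that $(\Sigma\eta)^*\colon\pi_3(S^3)\to\pi_4(S^3)$ is onto), so that $g\circ q=\eta\circ(\Sigma\eta\circ q)$ is null-homotopic as well.
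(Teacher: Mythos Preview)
Your proof is correct and follows essentially the same route as the paper's: both use the cell structure $\mathbb{CP}^2=S^2\cup_\eta e^4$ and its Puppe cofibration sequence, first argue that any $f$ restricts trivially to the $2$-skeleton and hence factors through $q:\mathbb{CP}^2\to S^4$, and then kill the remaining $\pi_4(S^2)=\bbbz_2$ by invoking that its generator is $\eta\circ\Sigma\eta$ so that $\Sigma\eta\circ q$ is null-homotopic. Your packaging via the exact sequence $\pi_3(S^2)\xrightarrow{(\Sigma\eta)^*}\pi_4(S^2)\xrightarrow{q^*}[\mathbb{CP}^2,S^2]\xrightarrow{i^*}\pi_2(S^2)$ is a bit more streamlined than the paper's hands-on argument (which is in fact exactly the alternative you record in your final remark), and your cohomology-ring computation $d^2x^2=f^*(u^2)=0$ for the first step is cleaner than the paper's extension argument; but the substance is the same.
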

\begin{proof}[Sketch of a proof of Proposition~\ref{cp2tocp1}]
Since $\mathbb{CP}^1=S^2$, we have
$\pi_4(\mathbb{CP}^1)=\pi_4(S^2)=\bbbz_2$
(see \cite[p. 339]{hatcher}).

The space $\mathbb{CP}^2$ can be envisioned as $\mathbb{CP}^1=S^2$
with a 4-disk $D$ attached along its boundary by the Hopf mapping $H:S^3\to S^2$.
Therefore the 2-skeleton $(\mathbb{CP}^2)^{(2)}$ consists of the sphere $S^2$.

Suppose now that we have a mapping
$\phi:\mathbb{CP}^2\to \mathbb{CP}^1$. If $\phi$ is not null-homotopic
(i.e. homotopic to a constant mapping) on $(\mathbb{CP}^2)^{(2)}$,
then its composition with the Hopf mapping is not null-homotopic either,
since $H$ generates $\pi_3(S^2)=\bbbz$. Then $\phi$ restricted to the
boundary $S^3$ of the 4-disk $D$ is not null-homotopic and cannot be
extended onto $D$, and thus onto the whole $\mathbb{CP}^2$. Therefore we
know that $\phi$ restricted to the 2-skeleton of $\mathbb{CP}^2$ is null-homotopic.

This shows that $\phi$ is homotopic to a composition
$$
\mathbb{CP}^2\xrightarrow{p}\mathbb{CP}^2/(\mathbb{CP}^2)^{(2)}=S^4\to \mathbb{CP}^1=S^2.
$$
It is well known that $\pi_4(S^2)=\mathbb{Z}_2$ and that the only non-null-homotopic
mapping $S^4\to S^2$ is obtained by a composition of the Hopf mapping $H:S^3\to S^2$
and of its suspension $\Sigma H:S^4\to S^3$ (see \cite{hatcher}, Corollary~4J.4
and further remarks on EHP sequence). Then, if $\phi$ is to be non-null-homotopic,
it must be homotopic to a composition
$$
\mathbb{CP}^2\xrightarrow{p}S^4\xrightarrow{\Sigma H}S^3\xrightarrow{H}S^2.
$$
The first three elements of this sequence are, however, a part of the cofibration sequence
$$
S^2\hookrightarrow
\mathbb{CP}^2\xrightarrow{p}S^4\xrightarrow{\Sigma H}S^3\rightarrow\Sigma
\mathbb{CP}^2\rightarrow\cdots,
$$
and as such, they are homotopy equivalent to the sequence
$$
S^2\hookrightarrow \mathbb{CP}^2\rightarrow \mathbb{CP}^2\cup C(S^2)
\hookrightarrow \mathbb{CP}^2\cup C(S^2) \cup C(\mathbb{CP}^2)\approx
\mathbb{CP}^2\cup C(S^2)/\mathbb{CP}^2,
$$
since attaching a cone to a subset is homotopy equivalent to contracting
this subset to a point (by $C(A)$ we denote a cone of base $A$) . One can
clearly see that in the above sequence the composition of the second and
third mapping and the homotopy equivalence at the end,
$\mathbb{CP}^2\rightarrow \mathbb{CP}^2\cup C(S^2)/\mathbb{CP}^2$,
is a constant map; thus the original map $\Sigma H\circ p:\mathbb{CP}^2\to S^3$
is homotopic to a constant map, and so is
$H\circ\Sigma H\circ p:\mathbb{CP}^2\to S^2$, the only candidate for
a non-null-homotopic mapping between these spaces.
\end{proof}

\begin{proof}[Proof of Theorem~\ref{T2}]
This proof consist of two parts:

In the first one, suppose $N$ is such that its universal cover $\widetilde{N}$ is a rational
homology sphere; $n=\dim M=\dim N$. We shall construct an explicit example of a sequence of
mappings in $H^{1,p}(M,N)$, $n-1\leq p<n$
of a fixed, non-zero degree, that converge in $H^{1,p}$-norm
to a constant mapping.

The manifold $M$ can be smoothly mapped onto an $n$-dimensional sphere
(take a small open ball in $M$ and map its complement into a the south pole).
This mapping is
clearly of degree 1 -- we shall denote it by $F$. Notice that $F$, by
construction, is a diffeomorphism between an open set in $M$
and $S^n\setminus\{\text{south pole}\}$.
Next, let us consider a smooth mapping $G:S^n\to N$ of non-zero degree, the existence of
which we have asserted in Theorem~\ref{T5}.

We define a sequence of mappings $F_k:M\to N$ as a composition of $F$,
mappings $g_k$ given by Example~\ref{ex 3.3} and $G$:
\begin{equation}
\label{def F_k}
F_k=G\circ g_k\circ F.
\end{equation}

The degree of $F_k$ is equal to $\deg G$, thus non-zero
and constant. On the other hand, we can, exactly as in the proof of Theorem~\ref{T1},
prove that $F_k$ converges in $W^{1,p}$ to a constant map.
Since $F_k$ converges a.e. to a constant map, it converges in $L^p$.
As concerns the derivative,
observe that $|DF_k|\leq Ck$ and $DF_k$ equals zero outside the set
$F^{-1}(C_k)$ whose measure is comparable to $k^{-n}$. Hence
$$
\int_M |DF_k|^p\leq
C k^p\, |F^{-1}(C_k)|\leq
C'k^{p-n}\to 0
\quad
\mbox{as $k\to\infty$.}
$$
and thus the convergence to the constant map is in $W^{1,p}$.

This shows that the mappings $F_k$ can be arbitrarily
close to the constant map, so the degree cannot
be defined as a continuous function on $H^{1,p}(M,N)$, $n-1\leq p<n$.

Now the second part: we shall prove that, as long as the universal cover
$\widetilde{N}$ of $N$
is not a rational homology sphere, the degree is well defined in $H^{1,p}(M,N)$,
$n-1\leq p<n$.
To this end, we will show that it is continuous on $C^\infty(M,N)$ equipped with the
$W^{1,p}$ distance and therefore it extends continuously onto the whole $H^{1,p}(M,N)$.
In fact, since the degree mapping takes value in $\bbbz$, we need to show that any two smooth
mappings that are sufficiently close in $H^{1,p}(M,N)$ have the same degree.

Consider two smooth mappings $g,h:M\to N$  that
are sufficiently close to a given mapping in $H^{1,p}(M,N)$.
By Lemma~\ref{thm white} of White, $g$ and $h$ are ($n-1$)-homotopic, i.e. there exists a
triangulation $\mathfrak{T}$ of $M$ such that $g$ and $h$ are homotopic on the ($n-1$)-skeleton $M^{n-1}$. Let $H\,:\,M^{n-1}\times[0,1]\to N$ be the homotopy, $H(x,0)=g(x)$,
$H(x,1)=h(x)$ for any $x\in M^{n-1}$.

Let us now look more precisely at the situation over a fixed $n$-simplex $\Delta$ of the
triangulation $\mathfrak{T}$. We have just specified a continuous mapping $\mathcal{H}$ on
the boundary of $\Delta\times[0,1]$: it is given by $g$ on $\Delta\times\{0\}$,
by $h$ on $\Delta\times\{1\}$ and by $H$ on $\partial \Delta\times[0,1]$.

Note that $\partial (\Delta\times[0,1])$ is homeomorphic to $S^n$, therefore the mapping
$\mathcal{H}:\partial (\Delta\times[0,1])\to N$ is of degree zero, since $\widetilde{N}$ is not
a rational homology sphere. We thus may fix orientation of $\partial\Delta\times [0,1]$,
so that
\begin{equation}
\label{eq:deg 0 over Delta}
\int_\Delta J_g=\int_\Delta J_h +\int_{\partial\Delta\times [0,1]} J_H.
\end{equation}
Recall that the Jacobian $J_f$ of a
function $f:M\to N$, with fixed volume forms $\mu$ on $M$ and $\nu$ on $N$, is
given by the relation $f^*\nu=J_f\mu$, and
$\deg f=(\int_M J_f \,d\mu)/(\int_N d\nu)=|N|^{-1} \int_M J_f\, d\mu$.
Therefore, by summing up the relations \eqref{eq:deg 0 over Delta} over all
the $n$-simplices
of $\mathfrak{T}$ we obtain
\begin{equation*}
\begin{split}
|N| \deg g &=\int_M J_g \,d\mu\\&=\sum_{\Delta^n \in \mathfrak{T}}
\left(\int_{\Delta^n} J_h \,d\mu+\int_{\partial \Delta^n\times[0,1]} J_H\,d\mu\right)\\
&=\int_M J_h\,d\mu+\sum_{\Delta^n \in \mathfrak{T}}\int_{\partial \Delta^n\times[0,1]}
J_H\,d\mu\\&=|N| \deg h+\sum_{\Delta^n \in \mathfrak{T}}\int_{\partial \Delta^n\times[0,1]} J_H\, d\mu.
\end{split}
\end{equation*}
We observe that every face of $\partial \Delta^n\times[0,1]$ appears in the above
calculation twice, and with opposite orientation, thus
$\sum_{\Delta^n \in \mathfrak{T}}\int_{\partial \Delta^n\times[0,1]} J_H$
cancels to zero, and $\deg g=\deg h$.
Finally, if the universal cover of $N$ is not a rational homology sphere
and $\pi_{n-1}(N)=0$, then $H^{1,p}(M,N)=W^{1,p}(M,N)$ by
\cite[Corollary~1.7]{hangl2} and hence the degree is well defined in $W^{1,p}(M,N)$.
\end{proof}

\section{Orlicz-Sobolev spaces and proofs of Theorems~\ref{T3} and~\ref{T4}}
\label{Orlicz}
We shall begin by recalling some basic definitions of Orlicz and Orlicz-Sobolev spaces;
for a more detailed treatment see e.g. \cite[Chapter 8]{AdamsFournier}
and \cite[Chapter 4]{HIMO}.

Suppose $P:[0,\infty) \to [0,\infty)$ is convex, strictly increasing, with $P(0)=0$.
We shall call a function satisfying these conditions a \emph{Young function}.
Since we want to deal with Orlicz spaces that are very close to $L^n$, we will
also assume that $P$ satisfies the
so-called \emph{doubling} or \emph{$\Delta_2$-condition}:
\begin{equation}
\label{doubling cond}
\text{there exists }K>0\text{ such that }P(2t)\leq K\,P(t)\text{ for all }t\geq 0.
\end{equation}
This condition is very natural in our situation and it simplifies
the theory a great deal. Under the doubling condition the {\em Orlicz space}
$L^P(X)$ on a measure space $(X,\mu)$ is defined as a class of all measurable
functions such that $\int_X P(|f|)\, d\mu<\infty$. It is a Banach space with respect
to the so-called {\em Luxemburg norm}
$$
\|f\|_P=\inf\left\{k>0~:~\int_X P(|f|/k)\leq 1\right\}\, .
$$

We say that a sequence $(f_k)$ of functions in $L^P(X)$ converges to $f$
\emph{in mean}, if
$$
\lim_{k\to\infty}\int_X P(|f_k-f|)=0.
$$
It is an easy exercise to show that under the doubling condition
the convergence in $L^P$ is equivalent to the convergence
in mean.

For an open set $\Omega\subset\bbbr^n$
we define the {\em Orlicz-Sobolev} space $W^{1,P}(\Omega)$ as the space of all
the weakly differentiable functions on $\Omega$ for which the norm
$$
\|f\|_{1,P}=\|f\|_{L^1}+\sum_{i=1}^m \|D_i f\|_P
$$
is finite. For example, if $P(t)=t^p$, then $W^{1,P}=W^{1,p}$.
In the general case, convexity of $P$ implies that $P$ has at least linear growth and hence
$L^P(\Omega)\subset L^1_{\rm loc}(\Omega)$, $W^{1,P}(\Omega)\subset W^{1,1}_{\rm loc}(\Omega)$.
Using coordinate maps one can then easily extend the definition of the
Orlicz-Sobolev space to compact Riemannian manifolds, with the resulting space denoted by $W^{1,P}(M)$.

Let now $M$ and $N$ be compact Riemannian manifolds, $n=\dim M$.
We shall be interested in Orlicz-Sobolev spaces that are small
enough to exclude, for $M=B^n$, $N=S^{n-1}$, the radial projection
$x\mapsto x/|x|$. As shown in \mbox{\cite[p. 2]{HIMO}}, to exclude
such projection it is necessary and sufficient for $P$ to grow
fast enough to satisfy the so-called \emph{divergence condition},
already announced (see \eqref{E1}) in the Introduction:
\begin{equation*}
\int_1^\infty \frac{P(t)}{t^{n+1}}=\infty.
\end{equation*}
The function $P(t)=t^n$ satisfies this condition, but
we are not interested in Orlicz-Sobolev spaces that are to
small -- that are contained in $W^{1,n}(M,N)$, so
we impose an additional growth condition on $P$,
already stated (see \eqref{E1.5}) in the Introduction:
$$
P(t)=o(t^n)
\quad
\mbox{as $t\to\infty$.}
$$
This condition is important. In the case of $P(t)=t^n$ the degree
and homotopy results for mappings between manifolds are well known and
in this instance the results hold without any topological assumptions
about the target manifold $N$. Our aim is to extend the results beyond
the class $W^{1,n}$ and the dependence on the topological structure
of $N$ is revealed only when the Orlicz-Sobolev space is larger than
$W^{1,n}$, so we really need this condition.

In order to have $C^\infty(M,N)$ functions dense in $L^P(M,N)$, we need yet
another technical assumption: that the function $P$ does not `slow down' too much,
more precisely, that
\begin{equation}
\label{growth cond 2}
\text{the function } t^{-\alpha} P(t) \text{ is non-decreasing for some }\alpha>n-1.
\end{equation}
This condition is also natural for us. We are interested in the
Orlicz-Sobolev spaces that are just slightly larger than
$W^{1,n}$, so we are mainly interested in the situation when the
growth of $P$ is close to that of $t^n$ and the above condition
requires less than that. Note that it implies
$$
W^{1,P}(M,N)\hookrightarrow W^{1,\alpha}(M,N)\hookrightarrow W^{1,n-1}(M,N).
$$

The condition \eqref{growth cond 2} plays an important role in the
proof of density of smooth mappings.
\begin{lemma}\cite[Theorem~5.2]{HIMO}
\label{T7}
If the Young function $P(t)$ satisfies the divergence condition \eqref{E1},
doubling condition \eqref{doubling cond} and growth condition \eqref{growth cond 2},
then $C^{\infty}(M,N)$ mappings are dense in $W^{1,P}(M,N)$.
\end{lemma}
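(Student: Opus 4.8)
The plan is to run the Schoen--Uhlenbeck smoothing scheme --- mollify, then post-compose with the nearest point projection onto $N$ --- in the Orlicz--Sobolev setting, keeping track of where the hypotheses \eqref{E1}, \eqref{doubling cond} and \eqref{growth cond 2} are used. First I would reduce to a local statement: a finite atlas of bi-Lipschitz coordinate charts on $M$ together with a subordinate smooth partition of unity reduces the problem to approximating, in the $W^{1,P}$ norm on $\Omega'\Subset\Omega$ with $\Omega\subset\bbbr^n$ a cube, a map $u\in W^{1,P}(\Omega,\bbbr^k)$ with $u(x)\in N$ for a.e.\ $x$ by maps in $C^\infty(\overline{\Omega'},N)$; here one uses that the Luxemburg norm transforms comparably under bi-Lipschitz changes of variables. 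Fix a tubular neighbourhood $N_\delta=\{y\in\bbbr^k:\dist(y,N)<\delta\}$ on which the nearest point projection $\Pi\colon N_\delta\to N$ is smooth, let $\vi_\eps$ be the standard mollifier, set $u_\eps=u*\vi_\eps$, and --- once $u_\eps$ is known to take values in $N_\delta$ --- put $w_\eps=\Pi\circ u_\eps$. The lemma follows once one checks: (i) $u_\eps\to u$ in $W^{1,P}$; (ii) $\dist(u_\eps(x),N)\to0$ uniformly in $x\in\Omega'$ as $\eps\to0$, so that $w_\eps\in C^\infty(\overline{\Omega'},N)$ for $\eps$ small; and (iii) $w_\eps\to\Pi\circ u=u$ in $W^{1,P}(\Omega')$.

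Parts (i) and (iii) are routine once the doubling condition \eqref{doubling cond} is available. Under \eqref{doubling cond}, convergence in the Luxemburg norm of $L^P$ is equivalent to convergence in mean, and continuity of translations in $L^P$ --- again a consequence of \eqref{doubling cond} --- gives $\int P(|u_\eps-u|)\to0$ and $\int P(|Du_\eps-Du|)\to0$, which is (i). For (iii), convexity of $P$ forces at least linear growth, so $W^{1,P}(\Omega)\subset W^{1,1}_{\loc}(\Omega)$ and the chain rule is legitimate; the pointwise bound $|Dw_\eps|\le\|D\Pi\|_{L^\infty(N_\delta)}\,|Du_\eps|$, together with a.e.\ convergence of a subsequence of $\{Du_\eps\}$, continuity of $D\Pi$, and the vector-valued dominated convergence theorem in $L^P$ (valid thanks to \eqref{doubling cond}), yields $Dw_\eps\to Du$ in $L^P$; uniqueness of the limit upgrades this to convergence of the whole sequence.

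The heart of the matter is (ii), and this is where the divergence condition \eqref{E1} and the growth condition \eqref{growth cond 2} enter. Since $u(y)\in N$ for a.e.\ $y$, writing $B_\eps=B(x,\eps)$ and $u_{B_\eps}=|B_\eps|^{-1}\int_{B_\eps}u$ one has
\[
\dist(u_\eps(x),N)\ \le\ \frac{1}{|B_\eps|}\int_{B_\eps}|u_\eps(x)-u(z)|\,dz\ \le\ \frac{C}{|B_\eps|}\int_{B_\eps}|u-u_{B_\eps}|\,,
\]
so it suffices to show that maps in $W^{1,P}(\Omega,\bbbr^k)$ have vanishing mean oscillation, uniformly on $\Omega'$. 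I would prove the oscillation bound by splitting $B(x,r)$ into dyadic annuli about $x$, estimating the mean of $|u-u_{B(x,r)}|$ over each annulus by the gradient, and passing from the average of $|Du|$ there to $P^{-1}$ of the average of $P(|Du|)$ via Jensen's inequality for the convex function $P$, using \eqref{growth cond 2} to keep $P^{-1}$ under control; summing the dyadic contributions, the divergence condition \eqref{E1} is exactly the hypothesis that forces the total to tend to $0$ as $r\to0$. Since $P(|Du|)\in L^1(\Omega)$, one also has $\sup_x\int_{B(x,r)\cap\Omega}P(|Du|)\to0$ as $r\to0$ (split $P(|Du|)$ into a bounded part and a part of small $L^1$ norm), and this makes the decay uniform in $x$. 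This is consistent with the fact recalled in the Introduction that \eqref{E1} is precisely what excludes $x\mapsto x/|x|$, the prototype of a map failing to be of vanishing mean oscillation.

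I expect step (ii) to be the main obstacle. The crude Riesz-potential estimate underlying the Poincar\'e inequality is too lossy at this borderline to give \emph{uniform} smallness of the oscillation directly, so the dyadic summation has to be organized carefully, exploiting the precise interplay between \eqref{E1} and \eqref{growth cond 2}; this is the only point at which the argument departs in an essential way from the classical $W^{1,n}$ proof of Schoen and Uhlenbeck. The localization, the mollification estimates, and the continuity of post-composition with $\Pi$ are, by contrast, a routine transcription of that classical scheme once the doubling condition is in force.
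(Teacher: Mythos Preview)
The paper does not prove this lemma; it is quoted from \cite[Theorem~5.2]{HIMO} and used as a black box, so there is no in-paper argument to compare your sketch against.

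Your outline is the standard one and matches the strategy of \cite{HIMO}: mollify, use a VMO-type oscillation estimate (this is where the divergence condition \eqref{E1} enters) to keep the mollification inside a tubular neighbourhood of $N$, then retract via the nearest-point projection. One point in your reduction deserves correction, however. The localization via partition of unity does \emph{not} reduce the problem to approximating $u$ by $N$-valued smooth maps on each chart separately: if $w_1,w_2\in C^\infty(\cdot,N)$ are local approximations on overlapping charts, the patched map $\psi_1 w_1+\psi_2 w_2$ has no reason to take values in $N$. The partition of unity should instead be used one level earlier, to assemble a single smooth $\bbbr^k$-valued map $u_\eps:M\to\bbbr^k$ from local Euclidean mollifications; the oscillation estimate (ii) is then checked chart by chart to conclude $u_\eps(M)\subset N_\delta$ for small $\eps$, and $\Pi$ is applied once, globally. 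With this adjustment your scheme is sound, and your identification of step~(ii) as the place where \eqref{E1} and \eqref{growth cond 2} do real work is correct.
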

In particular, in order to extend continuously the notions of degree and
homotopy classes to $W^{1,P}(M,N)$, it is enough to prove that they are continuous
on $C^{\infty}(M,N)$ endowed with $W^{1,P}$ norm (provided $P$ satisfies all the
hypotheses of Lemma~\ref{T7}).

Note that density of smooth mappings in $W^{1,P}(M,N)$ and the embedding
$W^{1,P}(M,N)\hookrightarrow W^{1,n-1}(M,N)$ implies that
$W^{1,P}(M,N)\hookrightarrow H^{1,n-1}(M,N)$

\begin{proof}[Proof of Theorem~\ref{T4}]
Let $M$, $N$ and $P$ be as in the statement of Theorem~\ref{T4}. Assume also that
$\pi_n(N)=0$.
By the above remark, it is enough to prove that if two smooth mappings $f,g:M\to N$
are sufficiently close to a $W^{1,P}(M,N)$ mapping in $\|\cdot\|_{1,P}$-norm
then they are homotopic.
However, by the inclusion $W^{1,P}(N,M)\hookrightarrow H^{1,n-1}(N,M)$,
we know that $f$ and $g$ are close in $H^{1,n-1}(N,M)$, and by Theorem~\ref{T1}
they are homotopic.

If $\pi_n(N)\neq 0$, we can construct, like in the proof of Theorem~\ref{T1},
a sequence of non-nullhomotopic mappings convergent to a constant mapping in $W^{1,P}(S^n,N)$.
Indeed, the mappings $G_k$ constructed as in the proof of Theorem~\ref{T1}
converge a.e. to a constant mapping, so they converge to a constant mapping in mean
and hence in $L^P$. On the other hand, the derivative of $G_k=G\circ g_k$ is bounded
by $Ck$ and hence
$$
\int_{S^n} P(|DG_k|)\leq P(Ck)|C_k|\leq C'P(k)k^{-n} \to 0
\quad
\mbox{as $k\to\infty$}
$$
by \eqref{E1.5} and the doubling condition. Hence $DG_k$ converges to zero in mean
and thus in $L^P$.
\end{proof}

\begin{proof}[Proof of Theorem~\ref{T3}]
If the universal cover of $N$ is not a rational homology sphere,
the continuity of the degree mapping with respect to the Orlicz-Sobolev norm, as in
the previous proof, is an immediate consequence of Theorem~\ref{T2} -- our assumptions
on $P$ give us an embedding of $W^{1,P}(M,N)$ into $H^{1,n-1}(M,N)$.

What is left to prove is that if the universal cover of $N$ is a rational homology
sphere, the degree cannot be defined continuously in $W^{1,P}(M,N)$.
If $F_k=G\circ g_k\circ F$ are the mappings defined as in the proof of Theorem~\ref{T2},
then $F_k$ converges a.e. to a constant map, so it converges in $L^P$. As
concerns the derivative, $|DF_k|$ is bounded by $Ck$ and
different than zero on a set $F^{-1}(C_k)$ whose measure is comparable to $k^{-n}$, so
$$
\int_M P(|DF_k|)\leq P(Ck) |F^{-1}(C_k)|\leq C' P(k)k^{-n}\to 0
\quad
\mbox{as $k\to \infty$}
$$
by \eqref{E1.5} and the doubling condition.
Hence $DF_k$ converges to zero in mean and thus in $L^P$.
\end{proof}

\section*{Acknowledgements}
The authors would like to thank all who participated in a
discussion posted on Algebraic Topology Discussion List
(http://www.lehigh.edu/$\sim$dmd1/post02.html) in the thread {\em
Question about rational homology spheres} posted by Haj\l{}asz in
2002. The authors also thank J. K\c{e}dra for pointing
Proposition~\ref{cp2tocp1}. At last, but not least, the authors
acknowledge the kind hospitality of CRM at the Universitat
Aut\`onoma de Barcelona, where the research was partially carried
out.


\begin{thebibliography}{888}

\bibliographystyle{alpha}

\bibitem{AdamsFournier} {\sc Adams, R., Fournier, J.:} Sobolev Spaces.
{\em Pure and Applied Mathematics} series, second
edition, Academic Press, New York, 2003.
%
\bibitem{bethuel} {\sc B\'ethuel, F.:} The approximation problem
for Sobolev maps between two manifolds. {\em Acta Math.}  { 167}
(1991), 153--206.
%
\bibitem{bethuelcdh}
{\sc B\'ethuel, F., Coron, J.-M., Demengel, F., H\'elein, H.:} A
cohomological criterion for density of smooth maps in Sobolev
spaces between two manifolds. {\em In:} Nematics (Orsay, 1990),
pp.\ 15--23, NATO Adv. Sci. Inst. Ser. C Math. Phys. Sci., {332},
Kluwer Acad. Publ., Dordrecht, 1991.
%
\bibitem{bethuelz} {\sc B\'ethuel, F., Zheng, X.:} Density of smooth
functions between two manifolds in Sobolev spaces. {\em J.
Functional Anal.}  { 80} (1988), 60--75.
%
\bibitem{Borel49}
{\sc Borel, A.:}
Some remarks about {L}ie groups transitive on spheres and tori.
{\em Bull. Amer. Math. Soc.}, 55 (1949), 580--587.
%
\bibitem{BottTu}
{\sc Bott, R.,  Tu, L.~W.:}
{\em Differential forms in algebraic topology},
volume~82 of Graduate Texts in Mathematics.
Springer-Verlag, New York, 1982.
%
\bibitem{brezisl}
{\sc Br\'ezis, H.,  Li, Y.:} Topology and Sobolev spaces. {\em J.
Funct.\ Anal.} { 183} (2001), 321--369.
%
\bibitem{brezisn1}
{\sc Br\'ezis, H., Nirenberg, L.:} Degree theory and BMO. I. Compact
manifolds without boundaries. {\em Selecta Math. (N.S.)} { 1} (1995),
197--263.
%
\bibitem{brezisn2}
{\sc Br\'ezis, H., Nirenberg, L.:} Degree theory and BMO. II.
Compact manifolds with boundaries. With an appendix by the authors
and Petru Mironescu. {\em Selecta Math.\ (N.S.)} { 2} (1996),
309--368.
%
%
\bibitem{Freedman}
{\sc Freedman, M.~H.:}
The topology of four-dimensional manifolds.
{\em J. Differential Geom.}, 17 (1982), 357--453.
%
\bibitem{giaquintams1}
{\sc Giaquinta, M., Modica, G., Sou\v{c}ek, J.:}
{\em Cartesian currents in the calculus of variations. I. Cartesian currents.} \
Ergebnisse der Mathematik und ihrer Grenzgebiete. 3. Folge.
A Series of Modern Surveys in Mathematics, 37. Springer-Verlag, Berlin, 1998.
%
\bibitem{giaquintams2}
{\sc Giaquinta, M., Modica, G., Sou\v{c}ek, J.:}
{\em Cartesian currents in the calculus of variations. II. Variational integrals.}
Ergebnisse der Mathematik und ihrer Grenzgebiete. 3.
Folge. A Series of Modern Surveys in Mathematics, 38. Springer-Verlag, Berlin, 1998.
%
\bibitem{GrecoISS}
{\sc Greco, L., Iwaniec, T., Sbordone, C., Stroffolini, B.:}
Degree formulas for maps with nonintegrable Jacobian.
{\em Topol.\ Methods Nonlinear Anal.} 6 (1995), 81--95.
%
\bibitem{hajlasz1}
{\sc Haj\l asz, P.:} Equivalent statement of the Poincar\'e
conjecture, {\em Annali.\ Mat.\ Pura Appl.} { 167} (1994), 25--31.
%
\bibitem{hajlasz2}
{\sc Haj\l asz, P.:} Approximation of Sobolev mappings, {\em
Nonlinear Analysis} { 22} (1994), 1579-1591.
%
\bibitem{hajlasz3}
{\sc Haj\l asz, P.:}
Sobolev mappings between manifolds and metric spaces.
In: {\em Sobolev spaces in mathematics. I}, pp.\ 185--222,
Int.\ Math.\ Ser.\ (N. Y.), 8, Springer, New York, 2009.
%
\bibitem{HIMO}
{\sc Haj\l{}asz, P. Iwaniec, T., Mal\'y, J., Onninen, J.:}
Weakly differentiable mappings between manifolds.
{\em Memoirs Amer. Math. Soc.} { 192} (2008), no.\ 899, 1--72.
%
\bibitem{HambletonKreck}
{\sc Hambleton, I.,  Kreck, M.:}
On the classification of topological {$4$}-manifolds with finite
fundamental group.
{\em Math. Ann.}, 280 (1988), 85--104.
%
\bibitem{hang}
{\sc Hang, F.:}
Density problems for $W\sp {1,1}(M,N)$.
{\em Commun.\ Pure Appl.\ Math.}  { 55}  (2002),  937--947.
%
\bibitem{hang2}
{\sc Hang, F.:}
On the weak limits of smooth maps for the Dirichlet
energy between manifolds.
{\em Comm.\ Anal.\ Geom.} { 13} (2005), 929-938.
%
\bibitem{hangl1}
{\sc Hang, F., Lin, F.:} Topology of Sobolev mappings. {\em Math.\
Res.\ Lett.} { 8} (2001),  321--330.
%
\bibitem{hangl2}
{\sc Hang, F., Lin, F.:} Topology of Sobolev mappings II. {\em
Acta Math.} { 191} (2003), 55--107.
%
\bibitem{Hatcher_SS}
{\sc Hatcher, A.:}
{\em Spectral sequences in algebraic topology.}
Unpublished book project, available on
\mbox{http://www.math.cornell.edu/~hatcher/SSAT/SSATpage.html}.
%
\bibitem{hatcher}
{\sc Hatcher, A.:}
{\em Algebraic topology.}
Cambridge University Press, Cambridge, 2002.
%
\bibitem{heleinw}
{\sc H\'elein, F., Wood, J.:}
Harmonic maps.
In: {\em Handbook of global analysis}, pp.\ 417--491, 1213,
Elsevier Sci. B. V., Amsterdam, 2008.
%
\bibitem{Hu_HT}
{\sc  Hu, S.-T.:}
{\em Homotopy theory}.
Pure and Applied Mathematics, Vol. VIII. Academic Press, New York, 1959.
%
\bibitem{JoachimWraith}
{\sc Joachim, M.,   Wraith, D.~J.:}
Exotic spheres and curvature.
{\em Bull. Amer. Math. Soc. (N.S.)}, 45 (2008), 595--616.
%
\bibitem{Kervaire69}
{\sc Kervaire, M.~A.:}
Smooth homology spheres and their fundamental groups.
{\em Trans. Amer. Math. Soc.}, 144 (1969), 67--72.
%
\bibitem{KlausKreck04}
{\sc Klaus, S., Kreck, M.:}
A quick proof of the rational {H}urewicz theorem and a computation of
the rational homotopy groups of spheres.
{\em Math. Proc. Cambridge Philos. Soc.}, 136 (2004), 617--623.
%
%
\bibitem{IwaniecM}
{\sc Iwaniec, T., Martin, G.:}
{\em Geometric function theory and non-linear analysis.}
Oxford Mathematical Monographs.
The Clarendon Press, Oxford University Press, New York, 2001.
%
\bibitem{pakzad}
{\sc Pakzad, M. R.:}
Weak density of smooth maps in $W\sp {1,1}(M,N)$
for non-abelian $\pi\sb 1(N)$.
{\em Ann.\ Global Anal.\ Geom.} { 23}  (2003),  1--12.
%
\bibitem{pakzadr} {\sc Pakzad, M. R., Rivi\`ere, T.:}
Weak density of smooth maps for the Dirichlet energy between
manifolds. {\em Geom.\ Funct.\ Anal.} { 13} (2003), 223--257.
%
\bibitem{schoenu1} {\sc Schoen, R., Uhlenbeck, K.:} The Dirichlet
problem for harmonic maps. {\em J. Differential Geom.} { 18}
(1983), 153--268.
%
\bibitem{schoenu2} {\sc Schoen, R., Uhlenbeck, K.:} Approximation
theorems for Sobolev mappings (unpublished manuscript).
%
\bibitem{spanier}
{\sc Spanier, E.~H.:}
{\em Algebraic topology.}
Corrected reprint. Springer-Verlag, New York-Berlin, 1981.
%
\bibitem{Szczepanski83}
{\sc Szczepa{\'n}ski, A.:}
Aspherical manifolds with the {${\bf Q}$}-homology of a sphere.
{\em Mathematika}, 30 (1983), 291--294 (1984).
%
\bibitem{warner}
{\sc Warner, F. W.}
{\em Foundations of Diffeentiable Manifolds and Lie Groups}.
Graduate Texts in Mathematics, 94. Springer-Verlag, New York-Berlin, 1983.
%
\bibitem{white1} {\sc White B.:} Infima of energy functionals in
homotopy classes of mappings. {\em J. Differential Geom.}
{ 23} (1986) 127--142.
%
\bibitem{Wolf_CC}
{\sc Wolf, J.~A. :}
{\em Spaces of constant curvature}.
Publish or Perish Inc., Houston, TX, fifth edition, 1984.
%
\end{thebibliography}
\end{document}